\documentclass[a4paper,reqno,11pt]{amsart}

\usepackage{amsmath,amsthm,amssymb,dsfont,graphicx,xspace,xcolor}
\usepackage[T1]{fontenc}
\usepackage{thm-restate}
\usepackage[hidelinks]{hyperref}
\usepackage{bm}
\usepackage{mathrsfs}
\usepackage[shortlabels]{enumitem}
\usepackage{float}
\usepackage{pict2e}
\usepackage{mathtools}
\usepackage{tikz}
\usepackage{xstring,refcount}
\usepackage[longnamesfirst,numbers,sort&compress]{natbib}
\usepackage{cleveref}

\renewenvironment{itemize}{\begin{itemorig}[noitemsep,leftmargin=3em]}{\end{itemorig}}

\renewenvironment{enumerate}{\begin{enumorig}[label=\textup{(\roman*)}]}{\end{enumorig}}
\newenvironment{enumerateNum}{\begin{enumorig}[label=\textup{(\arabic*)}]}{\end{enumorig}}
\newenvironment{enumerateAlpha}{\begin{enumorig}[label=\textup{(\alph*)}]}{\end{enumorig}}

\theoremstyle{plain}
\newtheorem{thm}{Theorem}
\newtheorem*{thm*}{Theorem}
\newtheorem{theorem}[thm]{Theorem}

\newtheorem{lemma}[thm]{Lemma}
\newtheorem*{lemma*}{Lemma}
\newtheorem{corollary}[thm]{Corollary}
\newtheorem*{cor*}{Corollary}

\newtheorem{claim}[thm]{Claim}
\newtheorem{subclaim}{Subclaim}[claim]

\theoremstyle{remark}

\crefname{obs}{Observation}{Observations}
\newtheorem*{lem*}{Lemma}

\theoremstyle{definition}
\newtheorem{conjecture}[thm]{Conjecture}
\newtheorem{problem}[thm]{Problem}
\newtheorem*{conj*}{Conjecture}
\crefname{lem}{Lemma}{Lemmas}
\crefname{thm}{Theorem}{Theorems}
\crefname{cor}{Corollary}{Corollaries}

\newenvironment{proofclaim}[1][Proof of claim]
  {\begin{proof}[#1]}{\end{proof}} 

\newenvironment{proofsubclaim}[1][Proof of subclaim]
    {\begin{proof}[#1]}{\end{proof}}

\DeclareMathOperator{\dom}{Dom}
\DeclareMathOperator{\e}{e}

\newcommand{\cCAL}{\mathcal{C}}
\newcommand{\iCAL}{\mathcal{I}}
\newcommand{\tCAL}{\mathcal{T}}
\newcommand{\kCAL}{\mathcal{K}}
\newcommand{\xCAL}{\mathcal{X}}
\newcommand{\yCAL}{\mathcal{Y}}
\newcommand{\bSCR}{\mathscr{B}}
\newcommand{\cSCR}{\mathscr{C}}
\newcommand{\lSCR}{\mathscr{L}}
\newcommand{\zSCR}{\mathscr{Z}}

\newcommand{\dic}{\vec{\chi}}

\newcommand{\bidbis}[1]{\overset{\text{\tiny$\bm\leftrightarrow$}}{#1}}
\newcommand{\bic}{\bidbis{\omega}}
\newcommand{\delmax}{\Delta_{\max}}

\newcommand{\delm}{\Delta_{\rm m}}
\newcommand{\dm}{d_{\rm m}}
\newcommand{\deltil}{\Tilde{\Delta}}
\newcommand{\delplus}{\Delta^{\!+}}

\newcommand{\EE}{\mathbb{E}}
\newcommand{\PP}{\mathbb{P}}
\newcommand{\DeltaDecomp}{\Delta_{\rm DDL}}

\tikzset{vertex/.style = {circle,fill=black,minimum size=5pt, inner sep=0pt, outer sep=2pt}}
\tikzset{tedge/.style = {very thick}}
\tikzset{digon/.style = {thick,<->,> = latex}}
\tikzset{region/.style={rounded corners, draw, very thick, outer sep=2pt, inner sep=10pt}}
\usetikzlibrary{fit}

\let\le\leqslant

\let\leq\leqslant
\let\geq\geqslant

\renewcommand{\epsilon}{\varepsilon}
\newcommand{\epsilonb}{\epsilon_b}
\renewcommand{\emptyset}{\varnothing}

\newcommand{\fb}{13b}

\newcommand{\add}{{\rm add}}
\newcommand{\del}{{\rm del}}
\newcommand{\Zsp}{Z_s^{\add}}
\newcommand{\Zsm}{Z_s^{\del}}

\hypersetup{
    pdftitle={coloring digraphs with \texorpdfstring{$\deltil-b$}{Delta - b} colors},
    colorlinks,
    linkcolor={red!50!black},
    citecolor={blue!50!black},
    urlcolor={blue!80!black}
}

\title{Coloring digraphs with $\deltil-b$ colors}

\author[K. Kawarabayashi]{Ken-ichi Kawarabayashi}
\address{National Institute of Informatics, The University of Tokyo, Japan}
\email{k\_keniti@nii.ac.jp}

\author[L. Picasarri-Arrieta]{Lucas Picasarri-Arrieta}
\address{National Institute of Informatics, Japan}
\email{lpicasarr@nii.ac.jp}

\thanks{Research supported by JSPS KAKENHI JP20A402 and 22H05001 and by JST ASPIRE JPMJAP2302}

\makeatletter
\let\old@setaddresses\@setaddresses
\def\@setaddresses{%
  \bigskip
  \bgroup
  \parindent 0pt
  \old@setaddresses
  \egroup
}
\makeatother

\begin{document}

\begin{abstract} 
The {\it dichromatic number} of a digraph is the minimum number of colors needed to partition its vertex set into acyclic subdigraphs.
A {\it biclique} is a set of vertices inducing all possible pairs of opposite arcs. For a digraph $D$, define $\deltil(D) = \max_{v\in V(D)} \sqrt{d^+(v) \cdot d^-(v)}$. 

We prove that, for every fixed integer $b\in\mathbb{N}$, every digraph $D$ with $\deltil(D) = \Delta$ being sufficiently large with respect to $b$ either contains a biclique whose size exceeds $\Delta-2b$ or has dichromatic number at most $\Delta-b$. 

This extends a classical result of Reed to the directed setting and supports a conjecture of the present authors. 
Furthermore, the theorem is tight, as for all integers $b$ and $\Delta\geq 3b$ there exists a digraph $D$ with $\deltil(D)= \Delta$, dichromatic number $\Delta-b+1$, and whose largest biclique has size $\Delta-2b+1$.
\end{abstract}

\maketitle

\clearpage

\section{Introduction}

It is well-known that the chromatic number $\chi(G)$ of a graph $G$ is bounded above by $\Delta(G)+1$, where $\Delta(G)$ denotes the maximum degree of $G$.
This bound is achieved by complete graphs, which motivates the study of the relationships between $\chi(G)$, $\Delta(G)$, and the clique number $\omega(G)$ of a graph~$G$.
As a first step in this direction, Brooks~\cite{brooksMPCPS37} obtained a seminal result stating that every graph $G$ satisfies
\[
    \chi(G) \leq \max \{\Delta(G), \omega(G)\},
\]
unless $\Delta(G)=2$ and $G$ contains an odd cycle.
Pushing this direction further, in 1977 Borodin and Kostochka~\cite{borodinJCTB23} posed a celebrated conjecture stating that
\[
    \chi(G) \leq \max \{\Delta(G)-1, \omega(G)\}
\]
holds for every graph $G$ with $\Delta(G)\geq 9$.
Even though the conjecture is still open in general, Reed~\cite{reedJCTB76} proved that it holds for graphs $G$ with $\Delta(G)\geq 10^{10}$.
Moreover, in 1998 Reed~\cite{reedJGT27} conjectured that these inequalities are just the tip of the iceberg, and posed the following famous conjecture.

\begin{conjecture}[\cite{reedJGT27}]
    \label{conj:reed}
    Every graph $G$ satisfies $\chi(G) \leq \lceil \frac{1}{2}(\Delta(G)+1 + \omega(G))\rceil$.
\end{conjecture}
As supporting evidence for the conjecture, in the same paper Reed proved that, for every graph $G$,
$\chi(G) \leq \tfrac{1}{2}\big(\Delta(G) + 1 + \omega(G)\big)$
holds whenever $\Delta(G)$ is sufficiently large and $\omega(G)$ is sufficiently close to $\Delta(G)$; formally when $\Delta(G) \geq \Delta_0$ and $\omega(G) \geq (1-\epsilon)\Delta(G)$ for some absolute constants $\Delta_0\in \mathbb{N}$ and $\epsilon\in (0,1)$.
This has the following two main consequences.

\begin{corollary}[\cite{reedJGT27}]
    \label{cor:reed_1}
    There exists $\epsilon>0$ such that every graph $G$ satisfies 
    \[
        \chi(G) \leq \lceil (1-\epsilon)(\Delta(G)+1) + \epsilon \omega(G)\rceil.
    \]
\end{corollary}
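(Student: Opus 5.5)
We will derive \Cref{cor:reed_1} from the theorem of Reed quoted just above: there are absolute constants $\Delta_0\in\mathbb{N}$ and $\epsilon_0\in(0,1)$ such that $\chi(G)\le \frac12(\Delta(G)+1+\omega(G))$ whenever $\Delta(G)\ge\Delta_0$ and $\omega(G)\ge(1-\epsilon_0)\Delta(G)$. The plan is to pick $\epsilon$ small compared with $\epsilon_0$ and $1/\Delta_0$, and to split into three cases according to $\Delta=\Delta(G)$ and $\omega=\omega(G)$. Write $f(G)=(1-\epsilon)(\Delta+1)+\epsilon\omega$. Since $\omega\le\Delta+1$, we have $f(G)\le \Delta+1$, with equality only when $\omega=\Delta+1$.

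\emph{Case 1: $\omega\ge(1-\epsilon_0)\Delta$ and $\Delta\ge\Delta_0$.} Reed's theorem gives $\chi\le \frac12(\Delta+1+\omega)$. This is a convex combination of $\Delta+1$ and $\omega$ with weight $1/2$ on $\omega$. It is therefore at most $f(G)$ as soon as $\epsilon\le 1/2$, because $\omega\le \Delta+1$ means that moving weight from $\Delta+1$ to $\omega$ only decreases the value. Taking ceilings finishes this case.

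\emph{Case 2: $\Delta<\Delta_0$.} If $\omega=\Delta+1$, then $f(G)=\Delta+1\ge\chi$ by the greedy bound. Otherwise $\omega\le\Delta$, and by Brooks' theorem $\chi\le\Delta$, except when $\Delta=2$ and $G$ contains an odd cycle. In that exceptional case $\chi=3$ and $\omega=2$, so $f=3-\epsilon$ and $\lceil f\rceil=3$, which is fine. In the main subcase $f\ge(1-\epsilon)(\Delta+1)>\Delta$ as long as $\epsilon(\Delta+1)<1$, and this holds once $\epsilon<1/\Delta_0$. Then $\lceil f\rceil\ge\Delta+1>\chi$.

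\emph{Case 3: $\Delta\ge\Delta_0$ and $\omega<(1-\epsilon_0)\Delta$.} This is the main obstacle. We need $\chi\le (1-\epsilon)(\Delta+1)+\epsilon\omega$, and it suffices to show $\chi\le(1-\epsilon_0\epsilon)\Delta$ or similar. Brooks alone only gives $\Delta$, so we need a genuine saving of $\Omega(\Delta)$ colours for graphs whose clique number is bounded away from $\Delta$. The route I would try first is a monotonicity trick that reuses Reed's theorem. Embed $G$ as an induced subgraph of a graph $G'$ with the same maximum degree whose clique number has been raised to about $(1-\epsilon_0)\Delta$. One way is to attach to suitable vertices a disjoint clique $K_{\lceil(1-\epsilon_0)\Delta\rceil}$; this increases neither $\Delta$ nor the chromatic number needed for $G$, because a disjoint union of $G$ with a clique has $\chi=\max$ of the two. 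Applying Reed's theorem to $G'=G\sqcup K_{\lceil(1-\epsilon_0)\Delta\rceil}$ gives
\[
\chi(G)\le\chi(G')\le \tfrac12\bigl(\Delta+1+\lceil(1-\epsilon_0)\Delta\rceil\bigr)\le \bigl(1-\tfrac{\epsilon_0}{2}\bigr)\Delta+1.
\]
We then need this to be at most $(1-\epsilon)(\Delta+1)$, which holds when $\epsilon\le\epsilon_0/4$ and $\Delta_0$ is large. So the disjoint-union trick should make Case 3 routine after all. The remaining care is in the constants: fix $\epsilon=\min\{\epsilon_0/4,\,1/(2\Delta_0)\}$, and check that the ceiling and the additive $+1$ terms are absorbed in every case.
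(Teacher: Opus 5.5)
Your argument is correct and is exactly the deduction the paper has in mind when it says the corollary is a consequence of Reed's theorem for graphs with $\omega\ge(1-\epsilon)\Delta$; the paper gives no further details. Padding $G$ with a disjoint clique of size $\lceil(1-\epsilon_0)\Delta\rceil$ handles small $\omega$ without changing $\Delta$, and taking $\epsilon<1/\Delta_0$ together with Brooks' theorem and the greedy bound handles small $\Delta$; your choice $\epsilon=\min\{\epsilon_0/4,1/(2\Delta_0)\}$ checks out in all three cases, though you should clean up the stray phrases ``$\chi\le(1-\epsilon_0\epsilon)\Delta$'' and ``attach to suitable vertices'', since the construction is a plain disjoint union.
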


\begin{corollary}[\cite{reedJGT27}]
    \label{cor:reed_2}
    For every $b\in \mathbb{Z}$, there exists $\Delta_b\in \mathbb{N}$ such that every graph $G$ with $\Delta(G) \geq \Delta_b$ and $\omega(G) \leq \Delta(G)-2b$ satisfies $\chi(G) \leq \Delta(G)-b$.
\end{corollary}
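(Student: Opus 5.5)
The plan is to derive Corollary~\ref{cor:reed_2} from Reed's theorem quoted just above it: there are absolute constants $\Delta_0\in\mathbb{N}$ and $\epsilon\in(0,1)$ such that every graph $G$ with $\Delta(G)\geq\Delta_0$ and $\omega(G)\geq(1-\epsilon)\Delta(G)$ satisfies $\chi(G)\leq\frac12(\Delta(G)+1+\omega(G))$. Fix $b$. If $b\leq 0$ the statement follows from Brooks' theorem, or from the trivial bound $\chi\leq\Delta+1$ when $b<0$. Indeed, $\omega\leq\Delta-2b$ combined with $\omega\leq\Delta+1$ gives $\chi\leq\max\{\Delta,\omega\}\leq\Delta-b$ as soon as $\Delta\geq3$, except in the case $b=0$, $\omega=\Delta+1$. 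That case is excluded by the hypothesis $\omega\leq\Delta$. So assume $b\geq1$ and write $\Delta=\Delta(G)$, $\omega=\omega(G)$.

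The argument then splits according to the size of $\omega$.

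\emph{Large clique number}, $\omega\geq(1-\epsilon)\Delta$. Here Reed's theorem applies once $\Delta\geq\Delta_0$, and gives
\[
\chi(G)\leq\tfrac12(\Delta+1+\Delta-2b)=\Delta-b+\tfrac12 .
\]
Since $\chi$ is an integer, this yields $\chi(G)\leq\Delta-b$.

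\emph{Small clique number}, $\omega<(1-\epsilon)\Delta$. Reed's theorem does not apply directly, and this is the main obstacle. The fix I would try first is to enlarge $G$ artificially so that the clique number becomes large without raising the maximum degree too much. Let $t=\lceil(1-\epsilon)\Delta\rceil$ and take $H$ to be the disjoint union of $G$ and $K_t$. Then $\Delta(H)=\Delta$ provided $t-1\leq\Delta$, and $\omega(H)=t\geq(1-\epsilon)\Delta$, while $\chi(H)=\max\{\chi(G),t\}$. Reed's theorem applied to $H$ gives
\[
\chi(G)\leq\tfrac12\bigl(\Delta+1+\lceil(1-\epsilon)\Delta\rceil\bigr)\leq\Delta-\tfrac{\epsilon}{2}\Delta+1 .
\]
The right-hand side is at most $\Delta-b$ once $\Delta\geq 2(b+1)/\epsilon$. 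Note that the disjoint union is a legitimate move, because Reed's theorem places no connectivity assumption on the graph.

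Finally, set $\Delta_b=\max\{\Delta_0,\lceil 2(b+1)/\epsilon\rceil,3\}$. With this choice both cases give $\chi(G)\leq\Delta-b$, which proves the corollary.
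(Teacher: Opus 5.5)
Your argument is correct. In the regime $\omega\ge(1-\epsilon)\Delta$ the quoted bound gives $\chi\le\Delta-b+\frac12$, and integrality finishes. Padding $G$ with a disjoint clique legitimately moves the small-$\omega$ case into that regime without changing $\Delta$. This is the kind of reduction the paper has in mind when it calls the corollary a consequence of Reed's theorem, although it never writes it out.

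The paper's own route to this statement is different. It notes that the corollary is the restriction of Theorem~\ref{thm:main} to symmetric digraphs, where $\deltil$, $\bic$, $\dic$ become $\Delta$, $\omega$, $\chi$. Theorem~\ref{thm:main} is then proved from scratch: a minimum counterexample, the Dense Decomposition Lemma, saviors in the dense sets, identification of the pairs of $M_i^\star$ inside tight dense sets, and a random partial coloring controlled by Talagrand, Azuma and the Local Lemma.

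Your reduction is two lines and gives $\Delta_b$ linear in $b$, but it uses Reed's deep theorem as a black box. The paper's argument is self-contained and extends to digraphs, at the price of $\Delta_b\ge 2^{\Omega(b^2)}$.

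Two small remarks. First, your integrality step genuinely needs the bound $\frac12(\Delta+1+\omega)$ without a ceiling, as the paper quotes it; the ceiling form, that of Conjecture~\ref{conj:reed}, only gives $\Delta-b+1$ at $\omega=\Delta-2b$. Second, the case split is unnecessary: padding with $K_{\Delta-2b}$ directly gives $\omega(H)=\Delta-2b\ge(1-\epsilon)\Delta$ as soon as $\Delta\ge 2b/\epsilon$.
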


Observe that Conjecture~\ref{conj:reed} is precisely the statement of Corollary~\ref{cor:reed_1} for $\epsilon = 1/2$. This gave rise to a line of research aimed at determining the largest value of $\epsilon>0$ for which Corollary~\ref{cor:reed_1} holds.
The current best result is due to Hurley, Joannis de Verclos, and Kang~\cite{hurleyAC22}, who proved that it holds for $\epsilon = 0.119$ for graphs with sufficiently large maximum degree. This improves on earlier results obtained by Bonamy, Perrett, and Postle~\cite{bonamyJCTB155} and by Delcourt and Postle~\cite{delcourtENDM61}.

\medskip

Let us briefly discuss the sharpness of Corollary~\ref{cor:reed_2} and its connection to Conjecture~\ref{conj:reed}.
First, observe that the threshold $\Delta_b$ in Corollary~\ref{cor:reed_2} must depend on $b$. Indeed, for every integer $b\in \mathbb{N}$, there exists a graph $G$ with $\Delta(G) = 6b+2$, $\omega(G) = 4b+2$, and $\chi(G) = 5b+3$. One example is the graph obtained from a $5$-cycle by blowing up each vertex into a clique of size $2b+1$.
In particular, such a graph does not satisfy the conclusion of Corollary~\ref{cor:reed_2}, showing that $\Delta_b$ must be at least $6b+3$.

However, the dependence of $\Delta_b$ on $b$ might not be necessary under the stronger hypothesis $\omega(G) \leq \Delta -2b-1$. Indeed, Conjecture~\ref{conj:reed} is easily seen to be equivalent to the following.

\begin{conjecture}
    \label{conj:reed_bis}
    For every $b\in \mathbb{Z}$ and every graph $G$, if $\omega(G) \leq \Delta-2b-1$ then $\chi(G) \leq \Delta-b$.
\end{conjecture}

Under the assumption that $\Delta_b$ is arbitrarily large compared to $b$, the condition $\omega(G) \leq \Delta-2b$ of Corollary~\ref{cor:reed_2} might not be optimal, as it is known to be tight only up to an $o(b)$ term (see the proof of~\cite[Theorem~2]{reedJGT27}).

\medskip

The goal of this paper is to extend Corollary~\ref{cor:reed_2} to the directed setting, building on earlier work extending the aforementioned results.
To this end, let us define three digraph parameters that respectively extend $\chi$, $\Delta$, and $\omega$  to the directed setting. Here, an {\it extension} of a graph parameter is understood in the following sense: if $D$ is a symmetric digraph (that is, $uv$ is an arc of $D$ if and only if $vu$ is), the value of the digraph parameter on $D$ coincides with the value of the corresponding graph parameter on the underlying graph of $D$. This is indeed the case for $\dic$, $\deltil$, and $\bic$ defined as follows.

Let $D$ be a digraph. We let $\deltil(D)$ denote the maximum geometric mean of the in- and out-degrees of the vertices of $D$, that is $\deltil(D) = \max \{ \sqrt{d^-(v)\cdot d^+(v)} : v \in V(D) \}$.
A \textit{biclique} of $D$ is a set of vertices inducing a complete digraph, which is a digraph containing all possible arcs.
The \textit{biclique number} $\bic(D)$ of $D$ is the size of the largest biclique of $D$.
The \textit{dichromatic number} of $D$, denoted by $\dic(D)$, is the smallest $k\in \mathbb{N}$ for which $D$ admits a partition $V_1,\dots,V_k$ of its vertex-set such that $D[V_i]$ is acyclic for every $i\in [k]$.

\medskip

Similar to the undirected case, relationships linking $\dic$, $\deltil$, and $\bic$ have recently gained interest. 
This started with a generalization of Brooks' theorem obtained first by Jacob and Meyniel~\cite{jacobCM75} and rediscovered independently by Mohar~\cite{moharLAA432}, which implies that every digraph $D$ satisfies
\[
    \dic(D) \leq \max \{ \lceil\deltil(D)\rceil, \bic(D)\},
\]
unless $\deltil(D) = 1$ and $D$ contains a directed cycle, or $\deltil(D) = 2$ and $D$ contains a symmetric odd cycle (see also~\cite{aboulkerDM113193} for alternative proofs and~\cite{bangSIDMA36,gonccalves2024brooks,aboulker2023digraph,picasarriArXiv25} for more general results). 

\medskip

Going one step further, Harutyunyan, Kawarabayashi, Picasarri-Arrieta, and Puig i Surroca~\cite{harutyunyanArXiv25} recently obtained that every digraph $D$ with $\deltil(D)$ being sufficiently large satisfies
\[
    \dic(D) \leq \max \{ \lceil\deltil(D)\rceil-1, \bic(D)\},
\]
unless $D$ is a very specific obstruction. This generalizes the aforementioned result of Reed~\cite{reedJCTB76}, which itself supports the conjecture of Borodin and Kostochka~\cite{borodinJCTB23}. Moreover, Kawarabayashi and Picasarri-Arrieta~\cite{kawarabayashiSODA25} recently posed the following conjecture, which, if true, implies both Conjecture~\ref{conj:reed} and an independent conjecture posed by Harutyunyan and Mohar~\cite{harutyunyanEJC18}.

\begin{conjecture}[\cite{kawarabayashiSODA25}]
    \label{conj:kpa}
    Every digraph $D$ satisfies $\dic(D) \leq \lceil \tfrac{1}{2}(\deltil(D) + 1 + \bic(D))\rceil$.
\end{conjecture}

Generalizing Corollary~\ref{cor:reed_1}, in the same paper they obtained the following, whose proof is inspired by the recent short proof of Corollary~\ref{cor:reed_1} due to King and Reed~\cite{kingJGT81}.

\begin{theorem}[\cite{kawarabayashiSODA25}]
    \label{thm:kpa}
    There exists $\epsilon>0$ such that every digraph $D$ satisfies 
    \[
    \dic(D) \leq \lceil (1-\epsilon)(\deltil(D) + 1) + \epsilon \bic(D)\rceil.
    \]
\end{theorem}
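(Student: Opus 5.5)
We would follow the strategy of King and Reed's short proof of Corollary~\ref{cor:reed_1}, adapted to acyclic colourings. Fix a small $\epsilon>0$ to be chosen at the end. Consider a counterexample $D$ minimising $|V(D)|$. Set $\Delta=\deltil(D)$, $\omega=\bic(D)$ and $k=\lceil (1-\epsilon)(\Delta+1)+\epsilon\omega\rceil$, so that $\dic(D)>k$.

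\textbf{Step 1 (degree reduction).} By minimality, every proper induced subdigraph is $k$-dicolourable. Suppose some vertex $v$ had $d^+(v)<k$ or $d^-(v)<k$. Colour $D-v$ and give $v$ a colour missing from $N^+(v)$ (or from $N^-(v)$). Any monochromatic directed cycle through $v$ would use an out-neighbour and an in-neighbour of $v$ of the same colour as $v$, which is now impossible. Hence $\min(d^+(v),d^-(v))\ge k$ for every $v$. Together with $d^+(v)d^-(v)\le\Delta^2$, this forces both degrees to lie in $[k,\Delta^2/k]$, which is within $O(\epsilon\Delta)$ of $\Delta$. The same argument gives a stronger principle: \emph{in any partial $k$-dicolouring extending to $D-X$, a vertex $v\in X$ can be completed greedily as soon as $N^+(v)$ or $N^-(v)$ sees fewer than $k$ distinct colours.} So it suffices to produce a partial dicolouring in which every uncoloured vertex has $\epsilon\Delta$ ``repeated colours'' on one side.

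\textbf{Step 2 (dense case, $\omega\ge(1-\delta)\Delta$).} Here we mimic Reed's reduction. We will show that large bicliques (size $>2\Delta/3$) are pairwise disjoint or nearly so, using the degree bounds of Step 1. From this we extract an acyclic set $S$ (indeed an independent set of the underlying graph) with two properties: it meets every maximum biclique, and it contains an in- and an out-neighbour of every vertex of near-maximum $\deltil$-value. Then $\bic(D-S)\le\omega-1$ and $\deltil(D-S)$ drops by roughly one, so minimality gives a $(k-1)$-dicolouring of $D-S$. Adding $S$ as a new colour class finishes this case. The choice of $S$ is made by a probabilistic or Hall-type argument inside the disjoint large bicliques.

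\textbf{Step 3 (sparse case, $\omega<(1-\delta)\Delta$).} We first show a local sparsity statement: for each $v$, the side $N^\sigma(v)$ with $d^\sigma(v)\le\Delta$ contains $\Omega(\delta\Delta^2)$ pairs that do not form a digon. Otherwise a greedy or Turán-type argument inside $N^\sigma(v)$ would produce a biclique of size $\ge(1-\delta)\Delta$. We then colour each vertex uniformly at random from $k$ colours, and uncolour every vertex lying in a monochromatic digon or in a monochromatic short directed cycle. For pairs $u,w\in N^\sigma(v)$ that are non-digon, the probability that both receive the same colour and both keep it is $\Omega(1/\Delta)$. So the expected number of repeated colours in $N^\sigma(v)$ is $\Omega(\delta\Delta)$. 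Talagrand's inequality together with the Lov\'asz Local Lemma makes this hold simultaneously for all $v$. Choosing $\epsilon\ll\delta$, Step 1's principle completes the colouring.

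\textbf{Main obstacle.} The difficulty lies in Step 3. Keeping a colour only when it creates no monochromatic digon or short cycle does not guarantee acyclic colour classes, since long monochromatic cycles may survive. Conversely, a retention rule forbidding all monochromatic arcs only rewards pairs that are non-adjacent in the underlying graph, and an out-neighbourhood could be a near-tournament with few such pairs. We would first try the following fix: retain colours only on vertices with no monochromatic arc, and separately treat neighbourhoods that are nearly tournaments. In those neighbourhoods, we would pre-colour a large transitive subtournament with few colours, which saves colours deterministically. Balancing these two mechanisms is where we expect most of the work to lie.
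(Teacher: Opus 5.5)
The paper does not prove Theorem~\ref{thm:kpa}; it only quotes it from \cite{kawarabayashiSODA25}, whose proof follows King and Reed. Your architecture is in that spirit: degree reduction, an acyclic set hitting all maximum bicliques when $\bic$ is close to $\deltil$, and a random colouring otherwise. As written, though, Step~3 does not produce a dicolouring, and the obstacle you flag at the end is a genuine gap.

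The missing idea is the retention rule the paper uses in its own random colouring process. After assigning colours $\phi'$ uniformly at random, uncolour $v$ if and only if $\phi'(v)\in\phi'(N^-(v))\cap\phi'(N^+(v))$. Every retained vertex then has no in-neighbour or no out-neighbour of its own colour. So it is a source or a sink of its colour class, and no monochromatic directed cycle of any length can pass through it; long cycles need no separate treatment. At the same time, any pair $\{x,y\}$ with at most one arc between them can keep a common colour $c$. If $xy$ is that arc, both keep $c$ whenever no in-neighbour of $x$ and no out-neighbour of $y$ receives $c$, which has probability $\Omega(1/\Delta^2)$ for each fixed $c$. Hence the rewarded pairs are exactly the non-digon pairs (the set $\bSCR_s$ in Claim~\ref{claim:prob_s_extendable}). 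Tournament-like neighbourhoods are then a favourable case rather than an obstruction, and your transitive-subtournament precolouring is unnecessary. With this rule, the Talagrand and Local Lemma computation of Claim~\ref{claim:prob_s_extendable} goes through essentially verbatim.

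Two other steps are wrong as justified, though both can be repaired.

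\textbf{The sparsity statement in Step~3.} It does not follow from any Tur\'an or greedy argument. Suppose $N^\sigma(v)$ induces a symmetric digraph whose underlying graph is $K_{2,2,\dots,2}$. It has only $|N^\sigma(v)|/2$ non-digon pairs, yet its biclique number is only $|N^\sigma(v)|/2$. For such a $v$ the random process gives only $O(1)$ repeated colours in expectation, and Step~2 does not apply. What is true is that the non-digon pairs inside $N^\sigma(v)$ contain a matching of size at least $(|N^\sigma(v)|-\bic(D))/2=\Omega(\delta\Delta)$, because the vertices missed by a maximum matching form a biclique. You must then use minimality of $D$ to exclude this configuration, in the spirit of Claims~\ref{claim:extending_matching} and~\ref{claim:matching_Di}:
\begin{enumerate}
\item colour $D$ minus $v$ and minus the vertices of $N^\sigma(v)$ having few non-digon partners there;
\item give each matched pair a common colour;
\item colour the remaining vertices greedily, and $v$ last.
\end{enumerate}

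\textbf{Step~2.} Maximum bicliques are not pairwise nearly disjoint; two of them may share all but one vertex. The usable structure is King's clusters, each with a large common core. Moreover, King's lemma applied to the digon graph (maximum degree at most $\deltil(D)$, clique number $\bic(D)$) only yields a digon-free set, which need not be acyclic. In the regime $\bic(D)\ge(1-\delta)\deltil(D)$ you can proceed as follows.
\begin{enumerate}
\item Take a transversal of the cores that is independent in the underlying graph. Haxell's theorem applies, since a core vertex has only $O(\delta\Delta)$ neighbours outside its biclique.
\item Extend this transversal to a \emph{maximal acyclic} set $S$. Maximality gives every $v\notin S$ both an in- and an out-neighbour in $S$.
\item Conclude $\deltil(D-S)\le\deltil(D)-1$, since $(a-1)(b-1)\le(\sqrt{ab}-1)^2$.
\end{enumerate}
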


Building on a result of Picasarri-Arrieta~\cite{picasarriJGT106}, they also obtained that every oriented graph $D$  (that is, a digraph with biclique number~$1$) with underlying graph $G$ satisfies 
$\dic(D) \leq \tfrac{1}{3}\Delta(G)+2$ and $\dic(D) \leq \tfrac{1}{\sqrt{2}} \deltil(D) +2$ (see~\cite{kawarabayashiArxiv24}).
This improves on earlier bounds obtained respectively by Harutyunyan and Mohar~\cite{harutyunyanEJC18}, Golowich~\cite{golowichDM339}, and Steiner~\cite{steinerDAM287}.

\medskip

It follows from Theorem~\ref{thm:kpa} that every digraph $D$ with 
$\bic(D) \leq \deltil(D)-\frac{1}{\epsilon}(b+2)$
satisfies $\dic(D) \leq \deltil(D) - b$. In view of Corollary~\ref{cor:reed_2} and Conjectures~\ref{conj:reed_bis} and~\ref{conj:kpa}, this condition is certainly not optimal. In the present paper, we prove the following sufficient condition for a digraph $D$ to be $(\deltil(D)-b)$-colorable, which generalizes Corollary~\ref{cor:reed_2} to the directed setting and supports Conjecture~\ref{conj:kpa}.

\begin{restatable}{theorem}{mainthm}
    \label{thm:main}
    For every $b\in \mathbb{Z}$, there exists $\Delta_b\in \mathbb{N}$ such that every digraph $D$ with $\deltil(D) \geq \Delta_b$ and $\bic(D) \leq \deltil(D)-2b$ satisfies $\dic(D) \leq \deltil(D)-b$.
\end{restatable}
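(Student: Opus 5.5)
We follow the strategy of Reed's proof of Corollary~\ref{cor:reed_2}, adapted to digraphs, arguing by minimal counterexample.

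\textbf{Reduction.} Write $\Delta=\deltil(D)$ and $k=\lfloor\Delta\rfloor-b$. Let $D$ be a counterexample with $|V(D)|$ minimum.

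\textbf{Degeneracy-type minimality.} Every vertex $v$ then satisfies $\min\{d^+(v),d^-(v)\}\ge k$. Indeed, if $d^+(v)<k$, extend a $k$-coloring of $D-v$ to $v$: some color is missing on $N^+(v)$, and giving $v$ that color creates no monochromatic directed cycle through $v$. The case $d^-(v)<k$ is symmetric.

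\textbf{Degree structure.} Since $\sqrt{d^+d^-}\le\Delta$ and both degrees are at least $\Delta-b-1$, each vertex has $d^+(v),d^-(v)\le \Delta+O(b)$. So $D$ is nearly regular: its in- and out-degrees lie in a window of width $O(b)$ around $\Delta$. Let $G$ be the graph of digons, the symmetric part of $D$.

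\textbf{Dense sets.} As in Reed's argument and in~\cite{harutyunyanArXiv25}, classify vertices as sparse or dense. A vertex is dense if its neighbourhood contains few non-digons, at most $\epsilon\Delta^2$ "missing" arcs, in the sense suited to dichromatic coloring. We would first show that dense vertices cluster into near-bicliques $X_1,\dots,X_t$ via a directed dense decomposition. Each $X_i$ has size about $\Delta$, few arcs leaving it, and most pairs inside it forming digons. The paper's macro $\DeltaDecomp$ suggests exactly such a decomposition lemma.

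\textbf{Coloring procedure.} We would then run a semi-random coloring.

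\begin{itemize}
\item Color a random subset with the naive wasteful procedure: each vertex gets a uniform color, and conflicts are uncolored only along directed structures.
\item Use the Lovász Local Lemma to guarantee that each sparse vertex $v$ sees many repeated colors in $N^+(v)$ or in $N^-(v)$. Its sparsity then yields a saving of $\Omega(\epsilon\Delta)\gg b$ colors, since the saving only needs to exceed $b+O(1)$.
\item For each dense set $X_i$, use a carefully chosen partial coloring (e.g., random matchings of non-adjacent pairs) so that each $X_i$ is colored using fewer than $k$ colors.
\end{itemize}

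\textbf{Bicliques versus savings.} The condition $\bic(D)\le\Delta-2b$ enters here. Because $X_i$ is not a biclique of size more than $\Delta-2b$, it has at least about $b$ disjoint non-digon pairs, or it is not a biclique at all. Giving both endpoints of such a pair the same color is harmless when there is no digon between them: an arc alone is not a cycle. Each pair saves one color, so roughly $b$ pairs save $b$ colors.

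\textbf{Finishing.} Complete the coloring greedily. Each uncolored vertex has more than $\max\{\text{reduced out-degree}\}$ available colors, so by the argument in the degeneracy step it can always be colored.

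\textbf{Main obstacle.} The hardest step is the dense-set analysis. We must show that a near-biclique $X$ with $\bic\le\Delta-2b$ has a matching of about $b$ non-digon pairs. We must also handle the external neighbours, for which the defect $2b$ versus $b$ is exactly tight. Moreover, in digraphs a vertex may be dense in outgoing arcs but not in incoming ones. We would first try to reduce this to Reed's undirected counting on the digon graph $G$. We would treat vertices with many non-digon arcs as sparse, exploiting the fact that a single arc does not close a cycle. Then we would mimic Reed's counting of matchings of non-edges in the complement of each $X_i$.
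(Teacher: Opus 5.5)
\textbf{There is a genuine gap: the dense-set analysis, which is the whole difficulty, is missing, and the one concrete claim you make about it is false.} The parts you do carry out are correct: the minimal counterexample, the degree window $\Delta\pm O(b)$, a directed dense decomposition, and a random partial colouring with the Local Lemma for sparse vertices. Co-colouring non-digon pairs is also a sound idea.

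\emph{The false claim.} The Dense Decomposition Lemma only gives $|X_i|>\Delta-O(\epsilon^{-1}\log^3\Delta)$. So $X_i$ may itself be a biclique of size at most $\Delta-2b$, with no non-digon pair at all. In general, $X_i$ minus a maximum non-digon matching is a biclique, so that matching is only guaranteed to have about $b-(\Delta-|X_i|)/2$ edges. Take a vertex $x$ of the large biclique with $d^+(x)=\lfloor\Delta\rfloor$. To colour $x$ last with $\lfloor\Delta\rfloor-b$ colours you need $b+1$ repetitions in $N^+(x)$. Many of these must come from its roughly $\Delta-|X_i|$ out-neighbours outside $X_i$. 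The bound $2b$ is tight (biclique on $\Delta+1$ vertices with $b$ vertices blown up into directed triangles). So ``roughly $b$ pairs save roughly $b$ colours'' cannot be rescued by slack: internal pairs and external neighbours must be counted exactly. Also, the task is not to colour $D[X_i]$ with fewer than $k$ colours. It is to extend a colouring already fixed on the outside neighbours.

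\emph{What the paper supplies instead.}
\begin{enumerateAlpha}
\item \emph{Saviors.} These are biclique vertices whose surplus neighbours lie in $\zSCR_i$, the outside vertices with fewer than $\log^4\Delta$ neighbours in $X_i$. ``Surplus'' means exactly as many neighbours as $x$ still needs after the internal pairs are counted. In the random colouring each such neighbour is likely to repeat a colour used on the biclique.
\item \emph{A minimality argument that all but $b-1$ biclique vertices are saviors.} Suppose $b$ of them fail. Their surplus neighbours outside $X_i\cup\zSCR_i$ have many neighbours in $X_i$. So they can be matched in $\overline{D}$ to biclique vertices and co-coloured with them. The size bound on the biclique then yields exactly enough repetitions to colour the $b$ failures last, contradicting minimality. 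That bound is looseness of $X_i$ in one case, and $\bic(D)\le\Delta-2b$ applied to $K_i^\star$ in the tight case.
\item \emph{Identifications for tight sets.} When $|X_i|\ge\Delta-O(b)$, the $\nu_i^\star$ internal pairs must deliver their repetitions to every savior simultaneously. This forces three steps:
\begin{enumerateNum}
\item discard the set $R_i$ of vertices missing at least $13b$ digons;
\item restrict saviors to $Y_i$, the vertices digon-adjacent to all matched vertices;
\item identify each matched pair into one vertex of an auxiliary digraph $D^\star$ before the random colouring.
\end{enumerateNum}
One then checks that $D^\star$ has degrees $\Delta+O(b)$, keeps sparse vertices sparse, and distorts distances by a bounded factor, so the Local Lemma still applies.
\end{enumerateAlpha}

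Your ``carefully chosen partial colouring'' with a greedy finish does not say which vertices profit from a co-coloured pair. Only vertices adjacent to both ends of the pair profit. Nor does it say how forced colour equalities interact with the uncolouring step of the random procedure. Finally, Reed's counting on the digon graph $G$ cannot simply be imported. The needed repetitions come through single arcs to vertices outside $X_i$, which $G$ does not see.
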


Corollary~\ref{cor:reed_2} indeed corresponds to the restriction of Theorem~\ref{thm:main} to symmetric digraphs. However, we note that Corollary~\ref{cor:reed_2} is known to hold with $\Delta_b$ being at most linear in $b$, while our proof requires at least $\Delta_b \geq 2^{\Omega(b^2)}$. We are aware that sharper arguments could likely reduce the required value of $\Delta_b$ substantially. However, we chose to prioritize the simplicity of the proof, particularly since a linear bound seems out of reach with our approach.
In particular, our argument is somewhat shorter than Reed’s original one.

We further note that, in the statement of Theorem~\ref{thm:main}, in contrast to the undirected case, the condition $\bic(D) \leq \deltil(D)-2b$ is best possible. Indeed, for all pairs of integers $b,\Delta\in \mathbb{N}$, there exists a digraph $D$ such that  $\deltil(D) \geq \Delta$, which satisfies $\bic(D) = \deltil(D) -2b+1$ and $\dic(D) = \deltil(D) -b+1$.
Such a digraph can be obtained from a biclique on  $\Delta+1$ vertices by blowing up $b$ vertices into directed triangles.

Finally, let us mention that we decided to state Theorem~\ref{thm:main} in terms of the parameter $\deltil$ because it is in line with numerous papers~\cite{harutyunyanEJC18,golowichDM339,kawarabayashiSODA25,harutyunyanArXiv25}, but that the same statement holds for many other degree parameters. To see this, for any function $f\colon \mathbb{N}^2 \to \mathbb{R}$, let $\Delta_f$ be the maximum degree parameter corresponding to $f$, that is, for every digraph $D$, let
\[
    \Delta_f(D) = \max_{v\in V(D)} f(d^-(v),d^+(v)),
\]
Our proof can easily be adapted to show that Theorem~\ref{thm:main} holds with $\Delta_f$ instead of $\deltil$ whenever $f$ is such that:
\begin{itemize}
    \item for every $x,y\in \mathbb{N}$, $f(x,y) \geq \min(x,y)$, and
    \item for every $a\in \mathbb{N}$, there exist $b,\Delta_0\in \mathbb{N}$ such that, for every integer $\Delta\geq \Delta_0$, we have $f(\Delta-a, \Delta+b) \geq \Delta$ and $f(\Delta+b, \Delta-a) \geq \Delta$.
\end{itemize}
For instance, Theorem~\ref{thm:main} holds when $\deltil$ is replaced by $\delmax$ or by the maximum arithmetic mean of the in- and out-degrees.

\medskip

One of the most natural degree parameters for which our result remains open is the maximum out-degree $\delplus$. Indeed, we do not even know whether it holds when restricted to oriented graphs. In this context, Kawarabayashi and Picasarri-Arrieta~\cite[Problem~7.1]{kawarabayashiSODA25} proposed the following open problem.

\begin{problem}[\cite{kawarabayashiSODA25}]
    Show the existence of $\epsilon>0$ and $\Delta_0\in \mathbb{N}$ such that every oriented graph $D$ with $\delplus(D)\geq \Delta_0$ satisfies $\dic(D) \leq (1-\epsilon)\delplus(D)$.
\end{problem}

It is a consequence of the Directed Brooks Theorem~\cite{jacobCM75} that every oriented graph $D$ with $\delplus(D)\geq 2$ satisfies $\dic(D)\leq \delplus(D)$.
It is also a consequence of a stronger result due to Harutyunyan, Kawarabayashi, Picasarri-Arrieta, and Puig i Surroca~\cite{harutyunyanArXiv25} (see~\cite[Theorem 12]{harutyunyanArXiv25}) that $\dic(D)\le \delplus(D)-1$ holds when $D$ is an oriented graph with $\delplus(D)$ being sufficiently large. 
However, we note that already the following problem is open.

\begin{problem}
    Show the existence of $\Delta_0\in \mathbb{N}$ such that every oriented graph $D$ with $\delplus(D)\geq \Delta_0$ satisfies $\dic(D) \leq \delplus(D)-2$.
\end{problem}

Bounding the dichromatic number of oriented graphs in terms of their maximum degree is at the heart of the following famous conjecture due to Erd\H{o}s and Neumann-Lara (see~\cite{erdosPNCN1979}), which can be seen as a directed version of Johansson's bound for triangle-free graphs~\cite{johansson1996}. 

\begin{conjecture}[\cite{erdosPNCN1979}]
    Every oriented graph $D$ satisfies $\dic(D) = O(\Delta/ \log \Delta)$, where $\Delta$ is the maximum degree of the underlying graph of $D$.
\end{conjecture}

The analogous conjecture in terms of $\deltil$ has been posed by McDiarmid and Mohar, see~\cite{harutyunyanEJC18}. 

\section{Technical Overview}

Our proof of Theorem~\ref{thm:main} combines structural and probabilistic arguments. 
We argue by contradiction and let $D$ be a smallest counterexample. 
Our starting point is a partition of the vertex set of $D$ into one sparse part (that is, a set of vertices whose out-neighborhood is sparse) and a collection of dense subdigraphs.
This is done using a recent Dense Decomposition Lemma due to Harutyunyan, Kawarabayashi, Picasarri-Arrieta, and Puig i Surroca~\cite{harutyunyanArXiv25}. 
Previous work~\cite{harutyunyanEJC18,kawarabayashiSODA25,harutyunyanArXiv25} already shows that the sparse vertices can be handled by a suitable random dicoloring argument. 
Consequently, the main difficulty lies in handling the dense sets.
Harutyunyan, Kawarabayashi, Picasarri-Arrieta, and Puig i Surroca~\cite{harutyunyanArXiv25} developed the structural machinery based on the notion of saviors, which they use to prove the case $b=2$ (and actually a stronger statement characterizing the exact obstructions to the ($\deltil-1$)-dicolorability). 
Without any further ingredients, this machinery naturally yields only the weaker statement that all digraphs with biclique number $\deltil - \Omega(b)$ have dichromatic number at most $\deltil-b$. 
The main novelty of the present paper is the introduction of vertex identifications, which provides the additional flexibility needed to extend this approach and obtain the exact bound for every fixed $b$.

We first show that every dense set contains a biclique on $(1-o(1))\deltil$ vertices, with at most $O(b)$ exceptional vertices.
The key notion in the structural part of the proof is that of a {\it savior}. Informally, a savior is a vertex inside the biclique that has sufficiently many neighbors outside it which are essentially ``independent'' of the biclique itself. 
Moreover, a savior needs to be in the common neighborhood of all exceptional vertices of the dense set.
The fact that the exceptional set has size $O(b)$ is crucial here. Since the Dense Decomposition Lemma is applied with a parameter $\epsilon_b$ chosen sufficiently small with respect to $b$, the common neighborhood of all exceptional vertices inside the biclique remains large. This allows us to choose many saviors that are adjacent to all exceptional vertices.
 
In the random partial dicoloring constructed later, a savior is likely to see many colors appearing both inside and outside the biclique.
Typically, a savior $v$ with out-degree $\deltil$ has at least $b+1$ colors repeated in its out-neighborhood, making it easy to color.
More precisely, no matter how we extend the coloring to the rest of its out-neighborhood, there will always be at least one color which remains free for $v$.
Using the minimality of our counterexample, we obtain that every dense set contains many saviors, roughly at least half of its vertices.
This is large enough so that, with high probability, in the random partial dicoloring, many saviors will be {\it rescuers}, that is, saviors that are actually easy to color.

To get a bit more into details, we need to distinguish two kinds of dense sets, namely the {\it loose} and {\it tight} ones.
A dense set is tight if its biclique is large, namely if it contains at least $\deltil -3b+2$ vertices, while the biclique of a loose set can contain from $\deltil - O_b(\log^3(\deltil))$ to $\deltil-3b+1$ vertices. The distinction between loose and tight dense sets lies not in the existence of saviors, but in how much flexibility they provide.

Since a loose dense set contains relatively few vertices, each of its saviors has many neighbors outside the dense set, and this external flexibility alone is sufficient to complete the random dicoloring.
The tight sets are considerably more delicate, and their structural analysis is the main technical contribution of the paper. Here, the large biclique has size $\deltil-O(b)$, so although there are still many saviors, they may have only a few neighbors outside the dense set, which is no longer sufficient to complete the coloring directly. 
To recover the missing flexibility, we force carefully chosen pairs of exceptional vertices to receive the same color. Since every savior is adjacent to all exceptional vertices, identifying two exceptional vertices creates a repeated color in the neighborhood of every savior. These additional repeated colors compensate for the lack of neighbors outside the dense set.

The probabilistic part is thus carried out by coloring randomly the vertices of an auxiliary digraph $D^\star$, obtained by identifying the aforementioned pairs of vertices inside the dense sets.
Formally, we give a color to each vertex of $D^\star$ uniformly at random, and then we uncolor all vertices of $D^\star$ whose color appears in both their in- and out-neighborhoods. This yields a random partial dicoloring of $D^\star$, and we interpret it as a random partial dicoloring of the original digraph, thereby introducing exactly the color equalities encoded by the identifications.

The crucial property is that tight dense sets admit enough additional structure compared to the loose sets, so that $D^\star$ does not differ too much from $D$. In particular, the required identifications increase the maximum degree by only $O(b)$ and preserve the sparsity of the sparse vertices. 
The structural properties established in the previous sections guarantee that every bad event, which is of the form ``the random partial dicoloring cannot be extended to the $i$th dense set'' or ``the random partial dicoloring cannot be extended to the $i$th sparse vertex'' is sufficiently unlikely. 
Moreover, each bad event depends on only polynomially many other bad events, while its probability is super-polynomially small, which allows us to apply the Lovász Local Lemma.
The polynomial bound on the dependencies of the bad events ultimately relies on the fact that the auxiliary digraph $D^\star$ preserves distances up to a factor of three, which in turn follows from the refined structure of tight dense sets.

\section{Preliminaries}

\subsection{Definitions and notation}

For any positive integer $k$, we denote by $[k]$ the set of integers $\{1,\dots,k\}$. For any set $X$, we denote by $\binom{X}{2}$ the set of pairs of elements of $X$.

Our notation on digraphs follows~\cite{bang2009}. 
Let $D$ be a digraph. The vertex-set of $D$ is denoted by $V(D)$ and its arc-set by $A(D)$.
A \textit{digon} of $D$ is a pair of arcs of $D$ in opposite directions between the same vertices. A digraph is {\it symmetric} if each of its arcs belongs to a digon.
The \textit{underlying graph} of $D$ is the undirected graph with vertex-set $V(D)$ in which $uv$ is an edge if and only if $uv$ or $vu$ is an arc of $D$.
The {\it complement} of $D$, denoted by $\overline{D}$, is the digraph with vertex set $V(D)$ that contains all arcs except those of $D$.
A {\it matching} of $D$ is a set of pairwise disjoint edges of the underlying graph $G$ of $D$. We denote by $\nu(D)$ the maximum cardinality of a matching of $D$.
 
Let $v$ be a vertex of $D$. The {\it out-neighborhood} of $v$ in $D$, denoted by $N^+_D(v)$, is the set of vertices $w\in V(D)$ such that $vw\in A(D)$. 
Similarly, the {\it in-neighborhood} $N^-_D(v)$ of $v$ is the set of vertices $u$ such that $uv\in A(D)$. 
The {\it out-degree} $d^+_D(v)$ and the {\it in-degree} $d^-_D(v)$ of $v$ are the number of out-neighbors and in-neighbors of $v$, respectively.
We denote by $N_D(v)$ the {\it neighborhood} of $v$ in $D$, which is the union of its in- and out-neighborhoods. 
We finally denote by $N_D^\pm(v)$ the set of vertices that are linked with $v$ by a digon, that is, $N_D^\pm(v) = N^+_D(v) \cap N^-_D(v)$. 
For each of the notations above, we omit the subscript when $D$ is clear from the context.
We use two notions of maximum degrees for $D$, namely:
\[
    \delmax(D) = \max_{v\in V(D)} \max  \{ d^+(v), d^-(v)\} \hspace{0.8cm} \text{and} \hspace{0.8cm} \deltil(D) = \max_{v\in V(D)} \sqrt{d^-(v)\cdot d^+(v)}.
\]

Let $X\subseteq V(D)$ be any subset of vertices of $D$. The subdigraph of $D$ {\it induced} by $X$, denoted by $D[X]$, is the digraph with vertex-set $X$ containing all arcs of $D$ with both extremities in $X$. 
We denote by $D/X$ the digraph obtained from $D$ by {\it identifying} $X$ into a single vertex $s$. Formally, $D/X$ has vertex-set $(V(D) \setminus X) \cup \{s\}$ and contains all arcs of $D[V(D)\setminus X]$ as well as the arcs
\[
    \{us : ux\in A(D) \text{ for some $x\in X$}\} \cup \{s u : xu\in A(D) \text{ for some $x\in X$}\}.
\]

Let $Y\subseteq V(D)$ be disjoint from $X$. We say that $X$ {\it dominates} $Y$ and that $Y$ is {\it dominated} by $X$ if $D$ contains all arcs of the form $xy$ for $x\in X$ and $y\in Y$. We further say that a vertex $x$ dominates $Y$ if $\{x\}$ dominates $Y$, and similarly that a vertex $y$ is dominated by $X$ if $\{y\}$ is dominated by $X$.

A \textit{biclique} of $D$ is a set of vertices inducing a complete digraph, which is a digraph containing all possible arcs.
The \textit{biclique number} $\bic(D)$ of $D$ is the size of the largest biclique of $D$.
For any $\ell$, let $\cSCR_\ell$ be the digraph with vertex-set $\{u_0,u_1,\dots,u_{\ell-1}\}$ and arc-set $\{u_iu_{i+1 \bmod \ell}: 0\leq i \leq \ell-1\}$.
A {\it directed cycle} is a digraph isomorphic to $\cSCR_\ell$ for some $\ell\geq 2$.
A digraph is {\it acyclic} if it does not contain any directed cycle.
Given an integer $k$, a \textit{$k$-dicoloring} of $D$ is a coloring of its vertex-set $\phi\colon V(D) \to [k]$ such that each color class $\phi^{-1}(i)$ induces an acyclic subdigraph on $D$. The \textit{dichromatic number} of $D$, denoted by $\dic(D)$, is the smallest $k\in \mathbb{N}$ for which $D$ admits a $k$-dicoloring.

\subsection{Dicoloring tools}
\label{subsec:dic_tools}

Along the proof, we often make use of the following easy and well-known observations. 

\begin{lemma}
    \label{lemma:coloring_contraction}
    Let $D$ be a digraph, $S\subseteq V(D)$ be such that $D[S]$ is acyclic, $D/S$ be the digraph obtained by identifying $S$ into a single vertex $s$, and $\phi^\star$ be a dicoloring of $D/S$. Then 
    \[
    \phi\colon v\mapsto 
    \begin{cases}
        \phi^\star(v) & \text{if $v\notin S$}\\
        \phi^\star(s) & \text{otherwise}
    \end{cases}
    \]
    is a dicoloring of $D$.
\end{lemma}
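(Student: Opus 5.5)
The plan is a direct check that every color class of $\phi$ induces an acyclic subdigraph of $D$. First, $\phi$ is well defined: every vertex of $D$ lies either outside $S$, where it keeps its color from $\phi^\star$, or inside $S$, where it receives $\phi^\star(s)$. Fix a color $i$ and argue by contradiction. Suppose $D[\phi^{-1}(i)]$ contains a directed cycle $C$; we will produce a monochromatic directed cycle in $D/S$.

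\emph{Case 1: $C$ avoids $S$.} Then all arcs of $C$ are arcs of $D[V(D)\setminus S]$, hence arcs of $D/S$. So $C$ is a directed cycle of $D/S$ colored $i$ by $\phi^\star$, a contradiction.

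\emph{Case 2: $C$ meets $S$ but is not contained in $S$.} Then $\phi^\star(s)=i$. Consider the cyclic sequence of vertices of $C$ and replace each maximal run of consecutive vertices lying in $S$ by the single vertex $s$. The result is a closed directed walk $W$ in $D/S$. Indeed:
\begin{itemize}
\item an arc $ux$ of $C$ with $u\notin S$ and $x\in S$ yields the arc $us$ of $D/S$;
\item an arc $xu$ with $x\in S$ and $u\notin S$ yields the arc $su$;
\item arcs with both ends outside $S$ are kept unchanged.
\end{itemize}
Since $C$ contains a vertex outside $S$, $W$ has at least one vertex other than $s$, so it is a closed walk of length at least $2$ with no loops. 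Every vertex of $W$ has $\phi^\star$-color $i$. A closed directed walk of positive length without loops contains a directed cycle (take a shortest closed subwalk). This gives a directed cycle of $D/S$ monochromatic under $\phi^\star$, a contradiction.

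\emph{Case 3: $C$ lies entirely in $S$.} This contradicts the hypothesis that $D[S]$ is acyclic.

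Hence every color class of $\phi$ is acyclic, and $\phi$ is a dicoloring of $D$. The only mildly delicate point is Case 2: we must make sure the contracted walk is nondegenerate, i.e.\ it does not collapse to the single vertex $s$, and that it still contains a genuine directed cycle. Both follow from $C\not\subseteq S$ and the absence of loops in $D/S$.
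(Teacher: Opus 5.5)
Your proof is correct and follows the paper's argument. You rule out $C\subseteq S$ by acyclicity of $D[S]$, keep $C$ unchanged when it avoids $S$, and otherwise map $C$ into $D/S$ to get a monochromatic directed cycle through $s$. The only cosmetic difference is that the paper directly takes one subpath of $C$ with both ends in $S$ and interior outside $S$ (which becomes a cycle through $s$), whereas you contract every run of $S$-vertices and then extract a cycle from the resulting closed walk, which yields exactly such a subpath's image.
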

\begin{proof}
    Assume for a contradiction that $D$, colored with $\phi$, contains a monochromatic directed cycle $\cSCR$. Since $S$ is acyclic, $V(\cSCR) \nsubseteq S$. Hence, either $\cSCR$ is disjoint from $S$, or it contains a directed path that is internally disjoint from $S$, with both extremities in $S$.  In both cases, $D/S$ colored with $\phi^\star$ contains a monochromatic directed cycle, a contradiction.
\end{proof}

\begin{lemma}
    \label{lemma:dicoloring_DAG}
    Let $D$ be a digraph, $S\subseteq V(D)$ be such that $D[S]$ is acyclic, and $\phi^\star$ be a dicoloring of $D-S$. If $\alpha$ is a color that is not used by $\phi^\star$ in $\bigcup_{s\in S}N^+(s)$ or in $\bigcup_{s\in S}N^-(s)$, then 
    \[
    \phi\colon v\mapsto 
    \begin{cases}
        \phi^\star(v) & \text{if $v\notin S$}\\
        \alpha & \text{otherwise}
    \end{cases}
    \]
    is a dicoloring of $D$.
\end{lemma}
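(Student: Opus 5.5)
Suppose, for a contradiction, that $\phi$ is not a dicoloring of $D$. Then $D$ contains a directed cycle $\cSCR$ all of whose vertices receive the same color $\beta$ under $\phi$. We will show that $\cSCR$ must meet both $S$ and $V(D)\setminus S$, and that this contradicts the choice of $\alpha$.

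First, $\cSCR$ cannot lie entirely in $V(D)\setminus S$. On $D-S$ the map $\phi$ coincides with $\phi^\star$, which is a dicoloring of $D-S$, so $D-S$ has no monochromatic directed cycle. Second, $\cSCR$ cannot lie entirely in $S$, because $D[S]$ is acyclic.

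Hence $\cSCR$ meets both $S$ and its complement. Since $\cSCR$ contains a vertex of $S$, which is colored $\alpha$, we get $\beta=\alpha$. Traverse $\cSCR$. It contains an arc $sv$ with $s\in S$ and $v\notin S$, namely the arc by which the cycle leaves $S$. It also contains an arc $us'$ with $u\notin S$ and $s'\in S$, by which the cycle re-enters $S$. Now $v\in N^+(s)$ and $\phi^\star(v)=\phi(v)=\alpha$, so $\alpha$ is used by $\phi^\star$ in $\bigcup_{s\in S}N^+(s)$. Likewise $u\in N^-(s')$ with $\phi^\star(u)=\alpha$, so $\alpha$ is used by $\phi^\star$ in $\bigcup_{s\in S}N^-(s)$.

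This contradicts the hypothesis that $\alpha$ is missing from at least one of the two unions. (If $\alpha$ is missing from the out-neighborhoods, the arc $sv$ alone already gives the contradiction; if it is missing from the in-neighborhoods, the arc $us'$ does.) There is no real obstacle in this argument. The only point needing care is observing that a cycle which meets both $S$ and its complement must use at least one arc leaving $S$ and at least one arc entering $S$, which is why a single missing side suffices.
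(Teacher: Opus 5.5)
Your proof is correct and follows the paper's argument: a monochromatic directed cycle can lie neither inside $D-S$ nor inside $S$, so it has color $\alpha$ and contains an arc leaving $S$ and an arc entering $S$. This forces $\alpha$ to appear among both the out-neighbors and the in-neighbors of $S$, contradicting the hypothesis.
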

\begin{proof}
    Assume for a contradiction that $D$, colored with $\phi$, contains a monochromatic directed cycle $\cSCR$. Then $V(\cSCR) \nsubseteq S$, as $D[S]$ is acyclic. Furthermore, $V(\cSCR) \nsubseteq V(D)\setminus S$, for otherwise $\cSCR$ would be a monochromatic directed cycle of $D-S$ colored with $\phi^\star$. 
    Therefore, $\cSCR$ contains an arc from $S$ to $V(D)\setminus S$ and an arc from $V(D)\setminus S$ to $S$. Since $\cSCR$ is monochromatic, this shows that $\alpha$ appears in both $\bigcup_{s\in S}N^+(s)$ and $\bigcup_{s\in S}N^-(s)$, a contradiction. 
\end{proof}

In particular, Lemma~\ref{lemma:dicoloring_DAG} implies that one can greedily color a digraph by repeatedly choosing an uncolored vertex $v$, and assigning it a color that does not appear in $N^-(v)$ or in $N^+(v)$. This easy procedure shows that every digraph $D$ satisfies $\dic(D) \leq \deltil(D)+1$.

Another useful application of Lemma~\ref{lemma:dicoloring_DAG} is the following. Suppose $\{u,v\}$ is a pair of uncolored vertices with at most one arc between them. One may then assign both vertices the same color, provided that this color does not appear in $N^+(u) \cup N^+(v)$ or in $N^-(u) \cup N^-(v)$. This is useful when $u$ and $v$ have many common out-neighbors or in-neighbors, since in that case: (i) a common color is likely to exist, and (ii) every uncolored common out-neighbor acquires a repeated color in its in-neighborhood, making it easier to color afterwards using the greedy procedure above.

Throughout the proof, we use these two procedures repeatedly without explicitly referring to Lemma~\ref{lemma:dicoloring_DAG}, for the sake of conciseness.

\subsection{Dense decompositions}

For the structural part of our proof, we use the following lemma due to Harutyunyan, Kawarabayashi, Picasarri-Arrieta, and Puig i Surroca~\cite{harutyunyanArXiv25}, for which we first need a specific definition.
Suppose that $D$ is a digraph with $\delmax(D) = \delm$, and let $d$ be any real number. We say that a vertex $v$ is {\it $d$-sparse} (with respect to $D$) if the digraph induced by its out-neighborhood contains at most $\delm(\delm-1) - d\delm$ arcs. 
The following lemma appears as a particular case of~\cite[Lemma~9]{harutyunyanArXiv25}. It can be seen as a general directed form of the so-called dense decompositions of graphs appearing in Molloy and Reed's series of papers, see for instance~\cite{reedJGT27, molloyCPC7,molloyComb18, reedJCTB76, farzadJCTB93, molloyJCTB109}. 
Informally, it says that every digraph with sufficiently large maximum degree $\delm$ can be decomposed into one set of sparse vertices and a collection of pairwise disjoint ``dense sets'' that behave like bicliques on roughly $\delm$ vertices.

\begin{lemma}[{\sc Dense Decomposition Lemma}~\cite{harutyunyanArXiv25}]
    \label{lemma:dense_decomposition}
    For every $0 < \epsilon < \frac{1}{2}$ there exists $\DeltaDecomp(\epsilon)$ such that the following holds.
    Let $D$ be a digraph with $\delmax(D) = \delm \geq \DeltaDecomp(\epsilon)$ and let $\dm = \log^3(\delm)$. Then $D$ admits a partition $(X_1, \ldots, X_t, S)$ of its vertex-set such that:
    \begin{enumerateAlpha}
        \item for every $i\in [t]$, $\delm- \frac{3}{\epsilon} \dm < |X_i| < \delm+1 + 4\dm$;
        \label{enum:dense_decomposition:1}
        \item for every $i\in [t]$ and $u\in V(D)$,
        $u \in X_i$ if and only if $|N^+(u) \cap X_i| \geq (1-\epsilon) \delm$; and
        \label{enum:dense_decomposition:3}
        \item vertices in $S$ are $\dm$-sparse.
        \label{enum:dense_decomposition:4}
    \end{enumerateAlpha}
\end{lemma}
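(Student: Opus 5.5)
Since this lemma is quoted from~\cite{harutyunyanArXiv25}, I would reprove it by adapting the Molloy--Reed dense decomposition to out-neighborhoods. Fix $\epsilon$, take $\delm$ large, and set $\dm=\log^3(\delm)$. Call a vertex \emph{dense} if it is not $\dm$-sparse. The goal is to build, around every dense vertex, a set satisfying the fixed-point property~\ref{enum:dense_decomposition:3}. I would then show that these sets are pairwise disjoint or equal, and put every remaining vertex into $S$.

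\textbf{Step 1: structure of a dense vertex.} Let $v$ be dense and $N=N^+(v)$. Then $D[N]$ has more than $\delm(\delm-1)-\dm\delm$ arcs. Since each vertex of $N$ contributes at most $|N|-1$ out-arcs inside $N$, this forces $|N|\geq \delm-\dm$. An averaging argument on the missing arcs then shows that all but $O(\sqrt{\dm\delm})$ vertices of $N$ have in- and out-degree at least $\delm-\sqrt{\dm\delm}$ inside $N$. Call this set of well-behaved vertices the \emph{core} $C_v$; it behaves like a biclique of size $\delm-o(\delm)$.

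\textbf{Step 2: the set $X_v$ and its size bounds.} Define
\[
X_v=\{u\in V(D) : |N^+(u)\cap C_v|\geq (1-\epsilon/2)\delm\}.
\]
Check that $C_v\subseteq X_v$ and $v\in X_v$. For the upper bound on $|X_v|$, count arcs entering $C_v$. Every vertex of $C_v$ has in-degree at most $\delm$, and almost all of that in-degree is already used by arcs coming from $C_v$ itself. Hence at most $O(\sqrt{\dm\delm}/\epsilon)$ outside vertices can send $(1-\epsilon/2)\delm$ arcs into $C_v$. Refining this count with the exact arc deficit $\dm\delm$ (rather than its square root) should give $|X_v|<\delm+1+4\dm$. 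For the lower bound, each $u\in X_v$ has all but $\epsilon\delm/2$ of its out-arcs landing in a set of size at most $\delm+O(\dm)$. The same arc count, run in reverse, gives $|X_v|>\delm-\frac{3}{\epsilon}\dm$.

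\textbf{Step 3: the exact equivalence in~\ref{enum:dense_decomposition:3}.} Since $C_v$ and $X_v$ differ in only $O(\dm/\epsilon)$ vertices, the thresholds $(1-\epsilon/2)\delm$ relative to $C_v$ and $(1-\epsilon)\delm$ relative to $X_v$ do not coincide exactly. To fix this, I would run the monotone iteration
\[
X^{(j+1)}=\{u : |N^+(u)\cap X^{(j)}|\geq (1-\epsilon)\delm\}.
\]
I would argue it is sandwiched between two sets differing by $O(\dm/\epsilon)$ vertices, so it stabilizes at a genuine fixed point with the size bounds of Step 2 intact.

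\textbf{Step 4: disjointness and sparseness.} Suppose two fixed points $X$ and $X'$ share a vertex $u$. Then $u$ sends $(1-\epsilon)\delm$ out-arcs into each of them, so $|X\cap X'|\geq(1-2\epsilon)\delm$, which exceeds the size slack since $\epsilon<\frac12$. Hence the fixed-point property forces $X=X'$. Taking the distinct sets as $X_1,\dots,X_t$, every dense vertex $v$ lies in its own $X_v$. Therefore every vertex of $S=V(D)\setminus\bigcup_i X_i$ is $\dm$-sparse, giving~\ref{enum:dense_decomposition:4}.

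\textbf{Main obstacle.} I expect the hardest part to be Step 3 combined with the sharp upper bound $\delm+1+4\dm$ in~\ref{enum:dense_decomposition:1}. Getting an exact ``if and only if'' while only losing $O(\dm)$ in the size needs careful arc counting, not the crude $\sqrt{\dm\delm}$ estimates. If the iteration does not obviously stabilize, I would instead take $X_v$ to be the largest set satisfying the one-sided closure condition and prove its size bound directly.
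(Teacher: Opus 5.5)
The paper does not prove this lemma; it is imported from \cite{harutyunyanArXiv25}. Your sketch therefore has to stand on its own, and it has a genuine gap in Step~4. Write $F(Y)=\{u : |N^+(u)\cap Y|\ge(1-\epsilon)\delm\}$, so that (b) says exactly $X_i=F(X_i)$. A common vertex does give $|X\cap X'|\ge(1-2\epsilon)\delm$. However, the fixed-point property alone does not force $X=X'$, because fixed points can be nested. For example, let $X$ induce a complete digraph on $\delm-1$ vertices, and let $\{w_1,w_2\}$ be a digon in which each $w_j$ has exactly $\lceil(1-\epsilon)\delm\rceil-1$ out-neighbors in $X$, with each vertex of $X$ receiving at most two such arcs. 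Then $X$ and $X\cup\{w_1,w_2\}$ are both fixed points satisfying (a). Since Step~3 does not specify which fixed point it produces, two dense vertices may yield overlapping but distinct sets, and then $(X_1,\dots,X_t,S)$ is not a partition.

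What is missing is a canonical choice together with a cross-containment argument.
\begin{itemize}
\item Since $C_v\subseteq F(C_v)$, iterating $F$ upward from $C_v$ gives the \emph{least} fixed point $X_v\supseteq C_v$. This set lies inside every fixed point containing $C_v$.
\item Suppose $X_v\cap X_{v'}\neq\emptyset$. Your overlap bound together with $|X_v\setminus N^+(v)|=O(\dm)$ gives $|N^+(v)\cap N^+(v')|\ge(1-2\epsilon)\delm-O(\dm)\gg\sqrt{\dm\delm}$.
\item Choose $w$ in this intersection missing at most $\sqrt{\dm\delm}$ out-arcs inside each of the two nearly complete out-neighborhoods. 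Using $d^+(w)\le\delm$, this yields $|N^+(v)\triangle N^+(v')|=O(\sqrt{\dm\delm})$.
\item Hence every vertex of $C_v$ has at least $(1-\epsilon)\delm$ out-neighbors in $C_{v'}$, so $C_v\subseteq F(X_{v'})=X_{v'}$. Minimality gives $X_v\subseteq X_{v'}$, and symmetry gives equality.
\end{itemize}

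The size bounds, which you flag as the main obstacle, are easier than you fear. Step~3 does, however, need an explicit upper barrier: a set $U\supseteq C_v$ with $F(U)\subseteq U$, so that the upward iteration never leaves $U$. Count arcs entering $N^+(v)$ rather than $C_v$. The total in-degree is at most $\delm^2$, and there are more than $\delm(\delm-1)-\dm\delm$ internal arcs, so fewer than $(\dm+1)\delm$ arcs enter $N^+(v)$ from outside. Consequently
\[
U=N^+(v)\cup\{u : |N^+(u)\cap N^+(v)|\ge(1-\epsilon)\delm-3\dm\}
\]
satisfies $|U\setminus N^+(v)|\le 2\dm+3$ and is closed under $F$, which gives $|X_v|<\delm+1+4\dm$. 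For the lower bound, all but about $\dm/\epsilon$ vertices of $N^+(v)$ miss fewer than roughly $\epsilon\delm$ out-arcs inside $N^+(v)$. These vertices therefore lie in $F(C_v)\subseteq X_v$, which gives $|X_v|>\delm-\frac{3}{\epsilon}\dm$.
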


From now on, for every $\epsilon \in  (0,\tfrac{1}{2})$, we denote by $\DeltaDecomp(\epsilon)$ the smallest integer for which Lemma~\ref{lemma:dense_decomposition} holds.
Intuitively speaking, the lemma above is useful when one wishes to extend some result known for sparse digraphs (that is, for digraphs in which all vertices are sparse) to a more general class of digraphs, as it provides structure on the vertices that are not sparse. This is typically our case with Theorem~\ref{thm:main}: if $D$ is a $d$-sparse digraph (that is, all vertices in $D$ are $d$-sparse) for some $d>\log^3(\Delta)$, then it is known that $D$ has dichromatic number at most $\Delta-\Omega(d)$, see~\cite[Theorem~1.4]{kawarabayashiSODA25} (with $\Delta$ being $\delmax(D)$ here).

\subsection{Probabilistic tools}

As mentioned above, our proof contains probabilistic arguments. The reader unfamiliar with the probabilistic method is referred to~\cite{alon2016book,molloyreed}.
We make use of the following three well-known results.
We first need the symmetric version of the Lov\'asz Local Lemma, due to Erd\H{o}s and Lov\'asz~\cite{erdosIFS10}.

\begin{lemma}[{\sc Lov\'asz Local Lemma}~\cite{erdosIFS10}]
\label{lemma:lll}
    Let $A_1,A_2,\dots,A_n$ be events in an arbitrary probability space. Suppose that each event $A_i$ is mutually independent of a set of all the other events but at most $d$, and that $\PP(A_i) \leq p$ for all $1\leq i \leq n$. If 
    $4pd \leq 1$ then $\PP\left(\bigcap_{i=1}^n \overline{A_i}\right)>0$.
\end{lemma}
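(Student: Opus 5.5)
We prove the standard inductive estimate: for every index $i$ and every set $S\subseteq [n]\setminus\{i\}$ with $\PP\bigl(\bigcap_{j\in S}\overline{A_j}\bigr)>0$,
\[
    \PP\Bigl(A_i \,\Big|\, \textstyle\bigcap_{j\in S}\overline{A_j}\Bigr) \leq 2p.
\]
Granting this, the chain rule gives
\[
    \PP\Bigl(\textstyle\bigcap_{i=1}^n \overline{A_i}\Bigr)=\prod_{i=1}^n \PP\Bigl(\overline{A_i}\,\Big|\,\textstyle\bigcap_{j<i}\overline{A_j}\Bigr)\geq (1-2p)^n .
\]
We may assume $d\geq 1$, so $p\leq \frac14$ and each factor is at least $\frac12>0$. (If $d=0$ the events are mutually independent, the product equals $\prod_i(1-\PP(A_i))$, and it is positive as long as $p<1$.) The positivity of each conditioning event needed in the chain is obtained along the way, inductively in $i$, from the same bound.

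The estimate is proved by induction on $|S|$. The case $S=\emptyset$ is just $\PP(A_i)\leq p\leq 2p$. For the inductive step, split $S$ into $S_1$, the indices $j\in S$ such that $A_i$ is \emph{not} guaranteed independent of $A_j$ (so $|S_1|\leq d$), and $S_2=S\setminus S_1$. If $S_1=\emptyset$, mutual independence of $A_i$ from $\{A_j : j\in S_2\}$ gives the bound $p$ directly. Otherwise write
\[
    \PP\Bigl(A_i \,\Big|\, \textstyle\bigcap_{S}\overline{A_j}\Bigr)=\frac{\PP\bigl(A_i\cap\bigcap_{S_1}\overline{A_j}\,\big|\,\bigcap_{S_2}\overline{A_j}\bigr)}{\PP\bigl(\bigcap_{S_1}\overline{A_j}\,\big|\,\bigcap_{S_2}\overline{A_j}\bigr)}.
\]
The numerator is at most $\PP\bigl(A_i\mid \bigcap_{S_2}\overline{A_j}\bigr)=\PP(A_i)\leq p$, using mutual independence from the $S_2$ events.

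For the denominator, list $S_1=\{j_1,\dots,j_r\}$ with $r\leq d$ and apply the union bound together with the induction hypothesis. Each conditional probability
\[
    \PP\Bigl(A_{j_k}\,\Big|\,\textstyle\bigcap_{S_2}\overline{A_j}\Bigr)
\]
is taken with respect to the strictly smaller set $S_2\subsetneq S$, so it is at most $2p$. Hence the denominator is at least $1-2pd\geq \frac12$ by the hypothesis $4pd\leq1$, and the ratio is at most $2p$. This closes the induction.

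The only delicate point is bookkeeping rather than a genuine obstacle. We must ensure that the conditioning events have positive probability, and that the induction hypothesis is applied only to sets strictly smaller than $S$ and not containing the index in question. Both are handled by performing the induction on $|S|$ simultaneously for all $i$ and noting that the lower bound $\frac12$ on the denominator itself certifies positivity.
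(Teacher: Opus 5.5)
Your proof is correct and is the standard Erd\H{o}s--Lov\'asz argument: the inductive bound $\PP\bigl(A_i \mid \bigcap_{j\in S}\overline{A_j}\bigr)\leq 2p$, then the chain rule. The paper gives no proof of this lemma and only cites it, so there is no alternative route to compare with. Your parenthetical on $d=0$ is needed, not pedantic. As literally stated, the lemma fails for $d=0$ and $p=1$: take a single event with $\PP(A_1)=1$. That is why standard formulations add the hypothesis $p<1$. This degenerate case does not affect the paper's application, where $p=\exp(-\log^2(\Delta))$.
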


We further make use of the following consequence of the celebrated concentration inequality of Talagrand~\cite{talagrand1995}. A proof of the following statement can be found in~\cite[Lemma~17]{harutyunyanArXiv25}.

\begin{lemma}[{\sc Talagrand}~\cite{talagrand1995}]
    \label{lemma:talagrand}
    Let $X$ be a random variable valued in $\mathbb{N}$, determined by $n$ independent trials and satisfying the following for some integer $r \geq 1$:
    \begin{enumerateNum}
        \item changing the outcome of any one trial can affect $X$ by at most $1$, and
        \item for every $k\in \mathbb{N}$, if $X\geq k$ then there is a set of at most $rk$ trials whose outcomes certify that $X\geq k$.
    \end{enumerateNum}
    Then, for any real $\lambda>126\sqrt{r\EE(X)} + 344 r$, we have $\PP\left(|X- \EE(X)| > \lambda \right) \leq 4\exp\left(\frac{-\lambda^2}{32 r(\EE(X) + \lambda)} \right)$.
\end{lemma}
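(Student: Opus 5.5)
The approach is to derive this statement from Talagrand's convex-distance inequality in its ``certifiable Lipschitz'' form, centred at a median, and then move from the median to the mean. Throughout, $c=1$ is the Lipschitz constant. I would proceed in three steps.

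\textbf{Step 1 (median version).} Let $M$ be a median of $X$ and set $A=\{X\le M-t\}$ for $t\ge 0$. Fix an outcome $\omega$ with $X(\omega)\ge M$. By hypothesis~(2) there is a certificate $I$ of at most $r X(\omega)$ trials fixing $X\ge X(\omega)$. Combining this with hypothesis~(1) and Cauchy--Schwarz, the Talagrand convex distance from $\omega$ to $A$ satisfies
\[
d_T(\omega,A)\ge \frac{t}{\sqrt{r\,X(\omega)}} .
\]
Talagrand's inequality $\PP(A)\,\PP(d_T\ge s)\le e^{-s^2/4}$, applied with $\PP(X\le M)\ge 1/2$ (respectively $\PP(X\ge M)\ge 1/2$), then gives
\[
\PP\big(|X-M|>t\big)\le 4\exp\!\Big(\frac{-t^2}{8r(M+t)}\Big).
\]
This is the standard Molloy--Reed argument, and I would cite it essentially verbatim.

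\textbf{Step 2 (mean versus median).} I would integrate the tail bound of Step~1 to get $|\EE(X)-M|\le \EE|X-M|=\int_0^\infty \PP(|X-M|>t)\,dt$. Splitting the integral at $t=M$, the range $t\le M$ contributes $O(\sqrt{rM})$ and the range $t>M$ contributes $O(r)$. This yields explicit constants of the form
\[
|\EE(X)-M|\le 20\sqrt{r\,\EE(X)}+64r .
\]
To replace $M$ by $\EE(X)$ on the right, I would use $M\le 2\EE(X)$ (Markov's inequality) together with a crude fixed-point argument.

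\textbf{Step 3 (combine).} Let $\lambda>126\sqrt{r\EE(X)}+344r$ and put $t=\lambda-|\EE(X)-M|$. The constants are chosen so that $t\ge\lambda/2$ under this hypothesis on $\lambda$. We also have $M+t\le \EE(X)+\lambda$. Hence
\[
\PP\big(|X-\EE(X)|>\lambda\big)\le \PP\big(|X-M|>t\big)\le 4\exp\!\Big(\frac{-(\lambda/2)^2}{8r(\EE(X)+\lambda)}\Big),
\]
which is exactly the claimed bound $4\exp\big(-\lambda^2/(32r(\EE(X)+\lambda))\big)$.

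The main obstacle I expect is the bookkeeping in Step~2: producing explicit numerical constants small enough that the thresholds $126$ and $344$ suffice to guarantee $t\ge\lambda/2$. The inequalities themselves are standard; the difficulty is only in tracking the constants carefully through the integral estimate.
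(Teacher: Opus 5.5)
The paper does not prove this lemma; it imports it from \cite[Lemma~17]{harutyunyanArXiv25}. Your derivation is the standard Molloy--Reed route and is correct: median concentration from Talagrand's convex-distance inequality, a median--mean comparison by integrating the tail, and then the shift $t=\lambda-|\EE(X)-M|\ge\lambda/2$ together with $M+t\le\EE(X)+\lambda$. It is also consistent with the shape of the constants in the statement ($126=2\cdot 63$, $344=2\cdot 172$, $32=4\cdot 8$).

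Three things need tidying.

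First, in Step~1 do not certify $X\ge X(\omega)$. The resulting lower bound $t/\sqrt{rX(\omega)}$ is not uniform over $\{X\ge M\}$, and a uniform bound is what Talagrand's inequality needs. Take $M$ to be an integer median, which exists since $X$ is integer-valued. For the lower tail, certify $X\ge M$ with at most $rM$ trials; this gives $d_T(\omega,A)>t/\sqrt{rM}$ for every $\omega$ with $X(\omega)\ge M$. For the upper tail, take $A=\{X\le M\}$ and certify at level $k=\lfloor M+t\rfloor+1$.

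Second, hypothesis~(2) is only available for integer $k$, so that certificate can have size up to $r(M+t+1)$. Talagrand then yields $\PP(X>M+t)\le 2\exp\big(-t^2/(4r(M+t+1))\big)$. This is at most $2\exp\big(-t^2/(8r(M+t))\big)$ once $M+t\ge 1$, while for $M+t<1$ your claimed bound exceeds $1$ anyway. This is exactly where the factor $8$, rather than Talagrand's natural $4$, is spent.

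Third, with your Step~1 constant the Gaussian part of the integral is
\[
\int_0^\infty 4e^{-t^2/(16rM)}\,dt=8\sqrt{\pi rM}\le 8\sqrt{2\pi}\,\sqrt{r\EE(X)}\approx 20.05\sqrt{r\EE(X)},
\]
so write $21$ rather than $20$. Markov's $M\le 2\EE(X)$ does this conversion directly, so no fixed-point argument is needed. The bookkeeping you flag as the main obstacle is not delicate: Step~3 only needs $|\EE(X)-M|<63\sqrt{r\EE(X)}+172r$, and $21\sqrt{r\EE(X)}+64r$ clears that with a margin of more than a factor $2.5$ in each term.
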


We finally make use of the celebrated concentration inequality of Azuma~\cite{azumaTMJ19}. The following statement can be found in~\cite[Chapter~11]{molloyreed}.

\begin{lemma}[{\sc Azuma}~\cite{azumaTMJ19}]
    \label{lemma:azuma}
    Let $X$ be a random variable determined by $n$ trials $T_1,\ldots,T_n$, such that, for each $1\leq i\leq n$, and any two possible sequences of outcomes $t_1,\ldots,t_i$ and $t_1,\ldots,t_{i-1},t'_i$,
    \[\Big|\,\EE(X\,|\, T_1=t_1,\ldots,T_i=t_i)-\EE(X\,|\, T_1=t_1,\ldots,T_{i-1}=t_{i-1},T_i=t'_i)\,\Big|\leq \delta_i.\]
    Then, for any real $\lambda\geq 0$, we have $\PP(|X-\EE(X)|>\lambda)\leq 2\exp\left(\frac{-\lambda^2}{2\sum_{i=1}^n \delta_i^2}\right)$.
\end{lemma}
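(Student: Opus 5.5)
We would prove Lemma~\ref{lemma:azuma} by the standard martingale method: build the Doob martingale of $X$ with respect to the trials, bound the conditional exponential moment of each increment using Hoeffding's lemma, and finish with a Chernoff-type bound. For $0\le i\le n$ set $X_i=\EE(X\mid T_1,\dots,T_i)$, so that $X_0=\EE(X)$, $X_n=X$, and $Y_i=X_i-X_{i-1}$ satisfies $\EE(Y_i\mid T_1,\dots,T_{i-1})=0$ by the tower property.

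The first step is to show that, conditionally on $T_1=t_1,\dots,T_{i-1}=t_{i-1}$, the increment $Y_i$ lies in an interval of length at most $\delta_i$. Given the past, $X_i$ is a function $g(t_i)$ of the outcome of $T_i$ alone. The hypothesis says exactly that $|g(t_i)-g(t_i')|\le\delta_i$ for any two outcomes, so all values of $g$ lie in some interval $[a,a+\delta_i]$. Moreover, $X_{i-1}$ is the conditional average of $g(T_i)$, whatever the dependence between the trials, so it is a constant given the past. Hence $Y_i\in[a-X_{i-1},\,a-X_{i-1}+\delta_i]$.

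The second step, which I expect to be the only genuinely non-routine point, is Hoeffding's lemma. If $Y$ has mean $0$ and takes values in $[\alpha,\beta]$ with $\beta-\alpha\le\delta$, then $\EE(e^{sY})\le e^{s^2\delta^2/8}$ for all real $s$. We would prove it in the usual way:
\begin{itemize}
\item by convexity of $y\mapsto e^{sy}$, we have $e^{sy}\le\frac{\beta-y}{\beta-\alpha}e^{s\alpha}+\frac{y-\alpha}{\beta-\alpha}e^{s\beta}$ on $[\alpha,\beta]$;
\item taking expectations and using $\EE Y=0$ gives $\EE(e^{sY})\le e^{L(h)}$, where $h=s(\beta-\alpha)$, $p=-\alpha/(\beta-\alpha)$, and $L(h)=-hp+\log(1-p+pe^{h})$;
\item a direct computation gives $L(0)=L'(0)=0$ and $L''\le 1/4$, so Taylor's theorem yields $L(h)\le h^2/8$.
\end{itemize}
Applying this conditionally to each $Y_i$ and peeling off the conditional expectations one at a time, we obtain
\[
\EE\big(e^{s(X-\EE X)}\big)=\EE\Big(\prod_{i=1}^n e^{sY_i}\Big)\le \exp\Big(\tfrac{s^2}{8}\sum_{i=1}^n\delta_i^2\Big).
\]

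Finally, Markov's inequality gives $\PP(X-\EE X>\lambda)\le \exp\big(-s\lambda+\tfrac{s^2}{8}\sum_i\delta_i^2\big)$ for every $s>0$. Choosing $s=4\lambda/\sum_i\delta_i^2$ yields the bound $\exp\big(-2\lambda^2/\sum_i\delta_i^2\big)$, which is at most $\exp\big(-\lambda^2/(2\sum_i\delta_i^2)\big)$. The case $\lambda=0$ is trivial, since the claimed bound is then $2$. Applying the same argument to $-X$, which satisfies the same hypothesis, bounds the lower tail, and a union bound gives the factor $2$ in the statement.
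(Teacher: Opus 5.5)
The paper does not prove this lemma: it imports it as a black box from Molloy and Reed's book (Chapter~11), so there is no argument in the paper to compare yours against. Your proof is correct and is the standard one: the Doob martingale $X_i=\EE(X\mid T_1,\dots,T_i)$, Hoeffding's lemma applied to the conditional law of each increment, peeling off the conditional expectations, and an exponential Markov bound. As you note, it uses no independence between the trials, which is consistent with the statement.

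Your route also gives a sharper constant. The hypothesis controls the \emph{oscillation} of $t_i\mapsto\EE(X\mid T_1=t_1,\dots,T_{i-1}=t_{i-1},T_i=t_i)$, so each increment lies in an interval of length $\delta_i$. This yields the McDiarmid-type bound $2\exp\bigl(-2\lambda^2/\sum_i\delta_i^2\bigr)$. The constant in the statement is what one gets from only using $|X_i-X_{i-1}|\le\delta_i$: Hoeffding's lemma on $[-\delta_i,\delta_i]$ gives $\EE(e^{sY_i}\mid T_1,\dots,T_{i-1})\le e^{s^2\delta_i^2/2}$, and hence the exponent $-\lambda^2/(2\sum_i\delta_i^2)$.

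The remaining loose ends are degenerate cases that deserve a sentence each:
\begin{itemize}
\item If $\beta=\alpha$, then $p$ is undefined, but $Y=0$ and Hoeffding's bound holds trivially.
\item If $\sum_i\delta_i^2=0$, then $X=\EE(X)$ almost surely, so the choice $s=4\lambda/\sum_i\delta_i^2$ is not needed.
\end{itemize}
Conditioning on $T_i=t_i$ also tacitly assumes discrete trials. That is the setting of the statement (``possible sequences of outcomes'') and of its use in the paper, where the trials are finitely-valued colour assignments.
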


\section{Proof of Theorem~\ref{thm:main}}

The remainder of the paper is devoted to the proof of Theorem~\ref{thm:main}. 
Roughly speaking, the proof proceeds in three steps, organized as follows.
We first take a minimum counterexample $D$, and exhibit a collection of structural properties on $D$. These structural properties arise from the combination of the Dense Decomposition Lemma (applied to $D$ with some $\epsilon$ small enough for a fixed $b$) and the minimality of $D$. 
Once enough structure is obtained on $D$, we manage to build an auxiliary graph $D^\star$, whose structure is smoother than that of $D$, with the guarantee that any dicoloring of $D^\star$ with few colors yields a dicoloring of $D$ with the same number of colors.
We then use a pseudo-random coloring process to conclude on the colorability of $D^\star$, and thus the one of $D$, hence reaching a contradiction. 

For convenience, let us first restate Theorem~\ref{thm:main} in the following equivalent form.

\begin{theorem}
	\label{thm:main_reform}
    For every $b\in \mathbb{Z}$, there exists $\Delta_b\in \mathbb{N}$ such that every digraph $D$ with $\deltil(D) \geq \Delta_b$ and $\bic(D) \leq \deltil(D)-2b+2$ satisfies $\dic(D) \leq \deltil(D)-b+1$.
\end{theorem}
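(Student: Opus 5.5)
Theorem~\ref{thm:main_reform} is Theorem~\ref{thm:main} with $b$ replaced by $b-1$. Since $b$ ranges over all of $\mathbb{Z}$, setting $\Delta'_b := \Delta_{b-1}$ turns the hypotheses $\bic(D)\le \deltil(D)-2(b-1)$ and conclusion $\dic(D)\le\deltil(D)-(b-1)$ into exactly the stated ones, and conversely. So the two statements are equivalent, and the real work is proving Theorem~\ref{thm:main_reform} directly. For $b\le 1$ the conclusion $\dic(D)\le\deltil(D)+1$ (or weaker) already follows from the greedy procedure after Lemma~\ref{lemma:dicoloring_DAG}. Hence I would assume $b\ge 2$, fix $\epsilon_b$ tiny in terms of $b$, choose $\Delta_b\ge \DeltaDecomp(\epsilon_b)$ huge, and take a counterexample $D$ minimizing $|V(D)|$, with $\Delta=\deltil(D)$.

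The first step is to derive structure from minimality. Every vertex has $d^+(v),d^-(v)\ge \Delta-b+1$: otherwise we color $D-v$ by minimality and extend greedily. Combined with $\sqrt{d^+d^-}\le\Delta$, both degrees then lie in $[\Delta-b+1,\Delta+O(b)]$, so $\delm=\Delta+O(b)$. We then apply Lemma~\ref{lemma:dense_decomposition} with $\epsilon_b$ to get a partition $(X_1,\dots,X_t,S)$. In each $X_i$ I would locate a biclique $C_i$ of size $(1-o(1))\Delta$ missing only $O(b)$ exceptional vertices, and show that roughly half of $C_i$ consists of \emph{saviors}. A savior is a vertex adjacent (in both directions) to all exceptional vertices and having many out-/in-neighbors outside $X_i$ that are far from $C_i$. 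If saviors were scarce, many vertices of $C_i$ would share their outside neighborhoods; one could then contract an acyclic pair (Lemma~\ref{lemma:coloring_contraction}), or delete a structured piece and recolor, to obtain a smaller counterexample or contradict $\bic\le\Delta-2b+2$.

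Next I would split the dense sets into \emph{loose} ones ($|C_i|\le\Delta-3b+1$) and \emph{tight} ones. In a tight set I would pick $\Theta(b)$ disjoint non-digon pairs of exceptional vertices, which exist because $\bic(D)$ is at most $\Delta-2b+2$. Identifying each such pair produces an auxiliary digraph $D^\star$. Every savior then sees each identified pair as a single color, which gives it repeated colors for free. I would check three things: $\deltil(D^\star)\le\Delta+O(b)$, sparse vertices remain $\Omega(\dm)$-sparse, and distances are distorted by at most a factor of three.

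For the probabilistic step, I would color $D^\star$ uniformly from $\Delta-b+1$ colors and uncolor every vertex whose color appears in both its in- and out-neighborhoods. The bad events are: a sparse vertex fails to see at least $b+1$ repeated colors, or a dense set has too few rescuers (saviors with enough repeated colors). Expectations come from sparsity and from the savior structure, and concentration from Lemma~\ref{lemma:talagrand} and Lemma~\ref{lemma:azuma}. Dependencies are polynomial in $\Delta$ thanks to the distance property, so Lemma~\ref{lemma:lll} applies. Finally we lift the coloring to $D$, which introduces exactly the encoded equalities, and complete it greedily: sparse vertices first, then the non-savior parts of each dense set, and rescuers last. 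The main obstacle I expect is the structural analysis of tight sets, namely guaranteeing that the identified pairs both supply the missing $b$ repeated colors to saviors and keep $D^\star$ locally close to $D$.
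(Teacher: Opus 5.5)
Your outline has the same architecture as the paper: minimum counterexample, Dense Decomposition Lemma, saviors, a loose/tight split, identifications inside tight sets, a random partial dicoloring of $D^\star$, Talagrand/Azuma, and the Local Lemma. However, several concrete steps are wrong.

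\emph{The case $b=1$.} It is not trivial: it asks for $\dic(D)\le\deltil(D)$ whenever $\bic(D)\le\deltil(D)$. That is a Brooks-type bound, and greedy coloring only gives $\deltil(D)+1$. Only $b\le 0$ is free, so the main argument must cover every $b\ge 1$.

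\emph{The minimality setup.} You cannot set $\Delta=\deltil(D)$ and invoke minimality on $D-v$, since $D-v$ may fail $\bic\le\deltil-2b+2$ or $\deltil\ge\Delta_b$. The paper fixes an integer $\Delta\ge\Delta_b$ and minimizes over all $H$ with $\lfloor\deltil(H)\rfloor\le\Delta$, $\bic(H)\le\Delta-2b+2$ and $\dic(H)\ge\Delta-b+2$. That class is closed under induced subdigraphs.

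\emph{The number of random colors.} Drawing from all $\Delta-b+1$ colors breaks your final greedy completion. Nothing then prevents almost all of $X_i$ from being colored. An uncolored exceptional vertex (in $U_i$, in $R_i$, or an uncolored pair of $M_i^\star$) has a neighbourhood the rescuer analysis does not control, so it may see every color. The paper uses only $\lceil\Delta/2\rceil$ colors. Since $K_i$ is a biclique, this forces deterministically that at most $\lceil\Delta/2\rceil+4b\le\frac23\Delta$ vertices of $X_i$ are colored. With your palette you would need an extra bad event.

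\emph{Sparse vertices.} A sparse vertex needs $2b+1$ repeated colors in $N^+(s)$, not $b+1$. Only out-neighbourhoods are sparse, and $d^+(s)$ can be as large as $\Delta+b+1$.

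More importantly, the structural core is asserted rather than proved, and the mechanisms you hint at are not the ones that work.

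\emph{The $O(b)$ bound on exceptional vertices.} This comes from a direct coloring argument. A matching $M$ of size $2b+1$ in $\overline{D_i}$, spanning $U$, lets you color $D-(X_i\setminus U)$ with matched pairs alike. You then finish $X_i$, coloring last the vertices dominated by $U$, which see at most $\Delta-b$ in-colors.

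\emph{Abundance of saviors.} This does not come from contracting pairs or from shared neighbourhoods. A savior needs exactly $d^+(x)-\Delta+b$ neighbours in $\zSCR_i$ (or the in-version, lowered by $\nu_i^\star$ in tight sets). Suppose $b$ vertices fail this. Their surplus outside neighbours not in $\zSCR_i$ are matched in $\overline{D}$ to biclique vertices, using property (iii) of the decomposition. Each matched pair gets a common color, which is possible because both still have many uncolored neighbours in $X_i$. The $b$ non-saviors are colored last and see at most $\Delta-b$ colors. In tight sets, that surplus is exactly what $|K_i^\star|\le\bic(D)\le\Delta-2b+2$ provides.

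\emph{The identified pairs.} They are not guaranteed to number $\Theta(b)$, and $\bic$ is not what produces them. One takes a maximum matching $M_i^\star$ of $\overline{D}[X_i\setminus R_i]$, where $R_i$ is the set of vertices missing at least $13b$ digons in $X_i$. Vertices of $R_i$ must never be identified; they are handled by a separate matching into $K_i^\star$. The savior threshold is then lowered by exactly $\nu_i^\star$. Excluding $R_i$, together with tightness, gives every identified pair at least $\Delta-29b+2$ common digon-neighbours. That is what yields $\delmax(D^\star)\le\Delta+31b$, preserved sparseness, and the common neighbour used in the distance argument — precisely the obstacle you left open.
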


\begin{proof}
    Let us fix $b\in \mathbb{Z}$. As explained in Section~\ref{subsec:dic_tools}, it is well-known that every digraph $D$ satisfies $\dic(D)\leq \deltil(D)+1$. We henceforth assume that $b\geq 1$. 
    From now on, let $\epsilonb = \frac{1}{24b}$. We let $\Delta_b$ be the smallest integer so that, for every real number $\Delta\geq \Delta_b$, each of the following inequalities holds:
    \begin{subequations}
    \renewcommand{\theequation}{$\Delta$\arabic{equation}}
    \begin{align}
        \Delta &\geq \DeltaDecomp(\epsilonb) + b-1,\label{eq:largeDelta:1}\\
        2\log^3(\Delta-b+1) &\geq \log^3(\Delta) \geq \tfrac{4}{5}\log^3(\Delta+b+1) +b+1,\label{eq:largeDelta:2}\\
        \epsilonb^2\Delta &> \epsilonb\log^4(\Delta) > 16\log^3(\Delta) + 240b^2,\label{eq:largeDelta:3}\\
        \log(\Delta) &\geq 2^8b,\label{eq:largeDelta:4}\\
        \Delta\log^3(\Delta) &\geq 378b \Delta + 2^{35}\cdot 715^2 \Delta +  3b\log^3(\Delta) + 108b^2,\label{eq:largeDelta:5}\\
        \left(1-\tfrac{6b}{\Delta}\right)^{\Delta/6b} &\geq 1/4, \label{eq:largeDelta:6}\\
        \exp(-\log^2(\Delta))& \geq \max(8\exp(-2^{-50}\log^3(\Delta)),2\exp(-\Delta^{1/3}/144)),\label{eq:largeDelta:8}\\
        \Delta^{1/3} & \geq b\cdot 2^{117b^2}\cdot \log^4(\Delta),\label{eq:largeDelta:9}\\
       \exp(\log^2(\Delta)) &\geq 2^{19} \cdot (\Delta+b+1)^{13}\label{eq:largeDelta:10}.
    \end{align}
    \end{subequations}
    Note that, for~\eqref{eq:largeDelta:6}, we use that $\lim_{x\to +\infty} (1-\tfrac{1}{x})^x = 1/\e > 1/4$.
    We claim that the statement holds for this particular value of $\Delta_b$. Assume that this is not the case, so there exists an integer $\Delta \geq \Delta_b$ and a digraph $H$ with:
    \begin{itemize}
        \item $\lfloor\deltil(H)\rfloor \leq \Delta$,
        \item $\bic(H) \leq \Delta-2b+2$, and
        \item $\dic(H) \geq \Delta-b+2$.
    \end{itemize}
    Among all such digraphs $H$, we choose $D=(V,A)$ of minimum order.
    In particular, it follows that every proper induced subdigraph of $D$ has dichromatic number at most $\Delta-b+1$.
    For the sake of conciseness, from now on, we let $\delm$ denote $\delmax(D)$ and $d= \log^3(\Delta)$.

    \subsection{Application of the Dense Decomposition Lemma}

    In this first part, we exhibit some structural properties of $D$, arising from the combination of the dense decomposition lemma and the fact that $D$ is a minimum counterexample to the desired statement. We first note that $D$ being a minimum counterexample implies the following degree conditions.

    \begin{claim}
        \label{claim:degrees}
        Let $v$ be an arbitrary vertex, then
        \begin{enumerate}
            \item $\Delta-b+1 \leq \min \{d^+(v),d^-(v) \} \leq \Delta$, and
            \label{enum:degrees_1}
            \item $\max \{d^+(v), d^-(v)\} \leq \Delta+b+1$.
            \label{enum:degrees_2}
        \end{enumerate}
    \end{claim}
    \begin{proofclaim}
        Let $v$ be an arbitrary vertex.
        Assume first that $\min(d^-(v), d^+(v)) \leq  \Delta - b$. By choice of $D$, there exists a $(\Delta-b+1)$-dicoloring $\phi^\star$ of $D-v$, which can be extended to $D$ by choosing for $v$ a color of $[\Delta-b+1]$ that is not appearing in its in- or out-neighborhood. This contradicts $\dic(D) > \Delta-b+1$, showing the first inequality.

		Next, if $\min \{d^+(v),d^-(v)\} \geq \Delta+1$ then $\sqrt{d^+(v) \cdot d^-(v)} \geq \Delta+1$, which in particular implies that $\lfloor\deltil(D)\rfloor \geq \Delta+1$, a contradiction. This shows~\ref{enum:degrees_1}.
        
        For~\ref{enum:degrees_2}, assume for a contradiction that $\max \{d^+(v),d^-(v)\} \geq \Delta+b+2$, then
        \[
            d^+(v)\cdot d^-(v) \geq (\Delta-b+1) \cdot (\Delta+b+2) = (\Delta+1)^2 +(\Delta +1)-b(b+1) \geq (\Delta+1)^2,
        \]
        and in particular $\lfloor\deltil(D)\rfloor \geq \Delta+1$, a contradiction. The claim follows.
    \end{proofclaim}

    In particular, Claim~\ref{claim:degrees} shows that $\delm$ is close to $\Delta$, which allows us to apply the Dense Decomposition Lemma and derive the following.

    \begin{claim}
        \label{claim:dense_decomposition}
        There exists a partition $(X_1,\dots,X_t,S)$ of $V$ such that:
        \begin{enumerate}
            \item for every $i\in [t]$, $ \Delta - \frac{4}{\epsilonb}d < |X_i| < \Delta + 5d$;
            \label{claim:dense_decomposition:i}
            \item for every $i\in [t]$ and $u\in X_i$, $|N^+(u) \cap X_i| \geq (1-\epsilonb)\Delta -b$;
            \label{claim:dense_decomposition:ii}
            \item for every $i\in [t]$ and $u\in V \setminus X_i$, $|N^+(u) \cap X_i| \leq (1-\epsilonb)\Delta +b+1$; and
            \label{claim:dense_decomposition:iii}
            \item vertices in $S$ are $(d/2)$-sparse.
            \label{claim:dense_decomposition:iv}
        \end{enumerate}
    \end{claim}
    \begin{proofclaim}
        By Claim~\ref{claim:degrees}, we have that $\Delta-b+1 \leq \delm \leq \Delta+b+1$. 
        Therefore, Lemma~\ref{lemma:dense_decomposition} can be applied with $\epsilon = \epsilonb$, as 
        \[
        \delm \geq \Delta-b+1 \geq \DeltaDecomp(\epsilonb)
        \]
        by~\eqref{eq:largeDelta:1}.
        Hence, by Lemma~\ref{lemma:dense_decomposition}, there exists a partition $(X_1,\dots,X_t,S)$ of $V$ such that:
        \begin{enumerateAlpha}
            \item for every $i\in [t]$, $\delm - \frac{3}{\epsilonb} \log^3(\delm) < |X_i| < \delm + 1+ 4\log^3(\delm)$;
            \label{proofclaim:dense_decomposition:a}
            \item for every $i\in [t]$ and $u\in V$, $u\in X_i$ if and only if $|N^+(u) \cap X_i| \geq (1-\epsilonb)\delm$;
            \label{proofclaim:dense_decomposition:b}
            \item vertices in $S$ are $\log^3(\delm)$-sparse.
            \label{proofclaim:dense_decomposition:c}
        \end{enumerateAlpha}
        
        We claim that the same partition satisfies the statement of the claim. Recall that $\Delta-b+1 \leq \delm \leq \Delta+b+1$. 
        Therefore, by~\ref{proofclaim:dense_decomposition:a}, for every $i\in [t]$, we have
        \[
            |X_i| > \Delta-b - \tfrac{3}{\epsilonb}\log^3(\Delta+b+1) \geq  \Delta - \tfrac{4}{\epsilonb}\log^3(\Delta)
        \]
        by~\eqref{eq:largeDelta:2}.
        Similarly,  for every $i\in [t]$, we have
        \[
            |X_i| < \Delta + b +1 + 4\log^3(\Delta+b+1) \leq \Delta + 5\log^3(\Delta),
        \]
        which shows~\ref{claim:dense_decomposition:i}.
        Let $i\in [t]$ be an arbitrary index and $u$ be an arbitrary vertex in $X_i$. Then by~\ref{proofclaim:dense_decomposition:b} we have
        \[
            |N^+(u) \cap X_i| \geq (1-\epsilonb)\delm \geq (1-\epsilonb)(\Delta-b+1)\geq (1-\epsilonb)\Delta-b,
        \]
        which shows~\ref{claim:dense_decomposition:ii}.
        Similarly, by~\ref{proofclaim:dense_decomposition:b}, for any vertex $u\in V\setminus X_i$ we have 
        \[
            |N^+(u) \cap X_i| \leq (1-\epsilonb)\delm \leq (1-\epsilonb)(\Delta+b+1)\leq (1-\epsilonb)\Delta+b+1,
        \]
        which shows~\ref{claim:dense_decomposition:iii}. Finally, \ref{claim:dense_decomposition:iv} follows from~\ref{proofclaim:dense_decomposition:c} and the fact that
        \[
            \log^3(\delm) \geq \log^3(\Delta-b+1) \geq \tfrac{1}{2}\log^3(\Delta)
        \]
        by~\eqref{eq:largeDelta:2}. The claim follows.
    \end{proofclaim}
    
    From now on, let us fix a partition $(X_1,\dots,X_t,S)$ of $V$ guaranteed by Claim~\ref{claim:dense_decomposition}. For every $i\in [t]$, we let $D_i = D[X_i]$ and $\nu_i = \nu(\overline{D_i)}$. Furthermore, from now on, for every $i\in [t]$, we fix a maximum matching $M_i=(u_jv_j)_{j\in [\nu_i]}$ of $\overline{D_i}$, and we let $U_i$ be the set of vertices spanned by $M_i$, that is
    \[
        U_i = \bigcup_{j\in [\nu_i]} \{u_j,v_j\}.
    \]
    We finally let $K_i = X_i \setminus U_i$. Observe that, by maximality of $M_i$, $K_i$ is a biclique.
    Our first goal is to bound $\nu_i$ for every $i\in [t]$, hence showing that each $D_i$ is almost a complete digraph. For this, we first need the following technical observation that we use several times later on.

    \begin{claim}
        \label{claim:extending_matching}
        Let $i\in [t]$, $M=(x_jy_j)_{j\in [\nu]}$ be a matching of $\overline{D_i}$ of size $\nu \leq 2b+1$, and $U$ be the set of vertices spanned by $M$. There exists a $(\Delta-b+1)$-dicoloring $\phi$ of $D-(X_i\setminus U)$ such that, for every $j\in [\nu]$, $\phi(x_j) = \phi(y_j)$.
    \end{claim}
    \begin{proofclaim}
        By minimality of $D$, there exists a $(\Delta-b+1)$-dicoloring $\phi$ of $D-X_i$. 
        By definition of a matching in $\overline{D_i}$,  for every $j$, there is at most one arc between $x_j$ and $y_j$ in $D$. We can thus extend $\phi$ to $U$ by assigning, for every $j\in [\nu]$, a common color to $\{x_j,y_j\}$ that is not already appearing in $N^+(x_j) \cup N^+(y_j)$. Note that such a color is available because $x_j$ and $y_j$ have a large fraction of their out-neighbors in $X_i$, which are still uncolored at that stage. Indeed, the number of already colored vertices in $N^+(x_j) \cup N^+(y_j)$ is at most
        \begin{align*}
            &|N^+(x_j)\setminus X_i| + |N^+(y_j) \setminus X_i| + |U|\\
            &= d^+(x_j) + d^+(y_j) - |N^+(x_j)\cap X_i| - |N^+(y_j) \cap X_i| + |U|\\
            &\leq 2(\Delta+b+1) -2((1-\epsilonb)\Delta - b) + (4b+2) &\text{by Claims~\ref{claim:degrees} and~\ref{claim:dense_decomposition}\ref{claim:dense_decomposition:ii},}\\
            &= 2\epsilonb\Delta + 8b+4\\
            &\leq \Delta-b &\text{by choice of $\epsilonb$ and~\eqref{eq:largeDelta:3}.}
        \end{align*}
        The claim follows.
    \end{proofclaim}

    We are now ready to bound $\nu_i$ for every $i\in [t]$. 
    
    \begin{claim}
        \label{claim:matching_Di}
        For every $i\in [t]$, $\nu_i \leq 2b$. In particular, $|U_i| \leq 4b$.
    \end{claim}
    \begin{proofclaim}
        Assume for a contradiction that $\overline{D_i}$ contains a matching of size $\nu=2b+1$, and let $M=(x_jy_j)_{j\in [\nu]}$ be such a matching. Note that we take $M$ of size precisely $2b+1$, even though there might exist larger matchings. Let $U$ be the set of vertices spanned by $M$.

        By Claim~\ref{claim:extending_matching}, there exists a $(\Delta-b+1)$-dicoloring $\phi$ of $D-(X_i\setminus U)$ such that $\phi(x_j) = \phi(y_j)$ for every $j\in [\nu]$. We show that $\phi$ can be extended to $D$, hence contradicting $\dic(D) > \Delta-b+1$. Let $L$ be the set of vertices in $X_i$ that are dominated by $U$, that is 
        \[
            L = X_i \cap \bigcap_{u\in U} N^+(u).
        \]
        In particular, note that $L$ is disjoint from $U$.
        Finally, let $R$ be the remaining vertices of $X_i$, so $R=X_i \setminus (U\cup L)$. Observe that $(U,L,R)$ partitions $X_i$. Let us first point out that $L$ is significantly large, as
        \begin{align*}
            |L| &\geq |X_i| - \sum_{u\in U}|X_i \setminus N^+(u)|\\&\geq |X_i| - |U| \cdot \big( |X_i| - ((1-\epsilonb)\Delta - b) \big) &\text{by Claim~\ref{claim:dense_decomposition}\ref{claim:dense_decomposition:ii},}\\
            &\geq \Delta -\tfrac{4}{\epsilonb}d - (4b+2)\cdot (\epsilonb \Delta + 5d +  b) &\text{by Claim~\ref{claim:dense_decomposition}\ref{claim:dense_decomposition:i},}\\
            &\geq \tfrac{1}{2}\Delta - (8b+4)\epsilonb \Delta &\text{by~\eqref{eq:largeDelta:3},}\\
            &\geq 2\epsilonb \Delta. &\text{by choice of $\epsilonb$.}
        \end{align*}
        This allows us to extend $\phi$ to $R$, by choosing, for each vertex $r\in R$, a color that is not already appearing in $N^+(r)$. 
        Note that this is possible, since the number of colored vertices in $N^+(r)$ is at most
        \begin{align*}
            |N^+(r) \setminus L| &= d^+(r) - |L \cap N^+(r)|\\
            &\leq (\Delta +b+1) - |L| + |X_i \setminus N^+(r)| &\text{by Claim~\ref{claim:degrees},}\\
            &\leq (\Delta +b+1) - 2\epsilonb \Delta + |X_i| - ((1-\epsilonb)\Delta-b) &\text{by Claim~\ref{claim:dense_decomposition}\ref{claim:dense_decomposition:ii},}\\
            &\leq (\Delta +b+1) -2\epsilonb \Delta + \epsilonb\Delta + 5d+b&\text{by Claim~\ref{claim:dense_decomposition}\ref{claim:dense_decomposition:i},}\\
            &= (1-\epsilonb)\Delta +5d+2b+1 \\
            &\leq \Delta-b &\text{by~\eqref{eq:largeDelta:3}.}
        \end{align*}
        We finally extend the partial dicoloring to $L$, by choosing for every vertex $x\in L$ a color that is not already appearing in $N^-(x)$. Recall that, by Claim~\ref{claim:degrees}, $d^-(x) \leq \Delta+b+1$. Moreover, by definition, $U$ dominates $x$, and by choice of $\phi$ at most $\frac{1}{2}|U|$ colors are used on $U$. Therefore, the number of colors appearing in the in-neighborhood of $x$ is at most
        \[
            d^-(x) - \frac{1}{2}|U| \leq \Delta +b+1 - |M| = \Delta -b.
        \]
        This shows that $\phi$ can be extended to $D$, a contradiction. Therefore, $\nu_i = |M_i| \leq 2b$, and by definition we have $|U_i| = 2|M_i| \leq 4b$, as desired.
    \end{proofclaim}

    We conclude this first part with the following technical claim, which is a consequence of Claim~\ref{claim:dense_decomposition}\ref{claim:dense_decomposition:iii} that we use several times later on. 

    \begin{claim}
        \label{claim:matching_S}
        For every $i\in [t]$, $X\subseteq X_i$, and $L \subseteq V(D) \setminus X_i$, if $|X|\geq (1-\tfrac{1}{2}\epsilonb)\Delta$ and $|L|\leq 2b^2+b$, then there exists a matching of size $|L|$ between $X$ and $L$ in $\overline{D}$.
    \end{claim}
    \begin{proofclaim}
        Let $u_1,\dots,u_{\ell}$ be an arbitrary labeling of $L$, where $\ell=|L|$, and let us show that there exists, in $\overline{D}$, a matching $x_1u_1,\dots,x_\ell u_\ell$, where $x_j \in X$ for every $j\in [\ell]$.

        We show that such a matching can be constructed greedily. To see this, assume for a contradiction that there exists such a matching $x_1u_1,\dots,x_{j}u_{j}$ for some $j< \ell$ which cannot be extended to span $u_{j+1}$. Then, in particular
        \[
            X \subseteq N^+(u_{j+1}) \cup \{x_1,\dots,x_j\},
        \]
        which implies that 
        \[
            |N^+(u_{j+1}) \cap X_i| \geq |X| - j\\
            \geq (1-\tfrac{1}{2}\epsilonb)\Delta - 2b^2-b \\
            > (1-\epsilonb)\Delta + b+1, 
        \]
        where the last inequality holds by~\eqref{eq:largeDelta:3}.
        Since $u_{j+1} \in L$ and, by assumption $L\cap X_i = \emptyset$, this is a contradiction to Claim~\ref{claim:dense_decomposition}\ref{claim:dense_decomposition:iii}. 
    \end{proofclaim}
    
    In the two next subsections, we exhibit the exact structure we make use of in the probabilistic analysis.
    For every $i\in [t]$, we let $\zSCR_i$ denote the set of vertices in $V(D) \setminus X_i$ with less than $\log^4(\Delta)$ in- and out-neighbors in $X_i$, that is
    \[
        \zSCR_i = \left\{ z \in V(D) \setminus X_i :  \max\Big(|N^-(z) \cap X_i|, |N^+(z) \cap X_i|\Big) < \log^4(\Delta) \right\}. 
    \]
    Intuitively, if some vertex $x\in K_i$ has a neighbor $z\in \zSCR_i$ then, in a random partial coloring (which is defined later on), it is likely that $x$ has a neighbor in $K_i$ that receives the same color as $z$, which makes $x$ ``easier'' to color. Hence, if $x$ has sufficiently many such neighbors in $\zSCR_i$, then $x$ will likely be ``easy'' to color in a random coloring. In this case, we will call $x$ a {\it savior}. 
    
    Our goal is thus to show that each $X_i$ has many saviors, implying that, in a random partial coloring, many vertices in $X_i$ are easy to color. We can then extend such a coloring to $X_i$ by keeping those easy vertices for the end.
    
    For technical reasons, the exact definition of a savior for $X_i$ actually depends on the order of $D_i$. 
    In the remainder of the proof, we thus distinguish two cases for $D_i$. We say that $D_i$ is {\it tight} if $|X_i| \geq \Delta-3b+2$, and that it is {\it loose} otherwise (that is, $|X_i| \leq \Delta-3b+1$).

    \subsection{Structure of the loose dense sets}

    We start with the case of loose $D_i$'s, which is a good warm-up before the more technical case of tight $D_i$'s.
    For every $i\in [t]$, we say that a vertex $x$ is a {\it loose $i$-savior} if $x\in K_i$ and 
    \[
        |N^+(x) \cap \zSCR_i| \geq d^+(x) - \Delta+b.
    \]
    The following shows the precise structure we need for loose $D_i$'s.
    
    \begin{claim}
        \label{claim:nb_loose_saviors}
        For every $i\in [t]$, if $D_i$ is loose then there exist at least $\frac{1}{2}\Delta$ distinct loose $i$-saviors.
    \end{claim}
    \begin{proofclaim}
        Let $i\in [t]$ be such that $D_i$ is loose.
        We prove that all vertices in $K_i$ are loose $i$-saviors except at most $b-1$ of them, hence showing that the number of loose $i$-saviors is at least 
        \begin{align*}
            |K_i| - b + 1 &\geq |X_i|-5b+1 &\text{by Claim~\ref{claim:matching_Di},}\\
            &\geq \Delta - \tfrac{4}{\epsilonb}d - 5b +1 &\text{by Claim~\ref{claim:dense_decomposition}\ref{claim:dense_decomposition:i},}\\
            &\geq \tfrac{1}{2}\Delta &\text{by~\eqref{eq:largeDelta:3}.}
        \end{align*}
        Assume for a contradiction that there exist $b$ distinct vertices $y_1,\dots,y_b\in K_i$, none of which is a loose $i$-savior.
        For $j\in [b]$, let $L_j$ denote the set of out-neighbors of $y_j$ outside $X_i \cup \zSCR_i$, that is $L_j = N^+(y_j) \setminus (X_i \cup \zSCR_i)$. 
        Recall that, by definition, since $y_j$ is not a loose $i$-savior, we have $|N^+(y_j) \cap \zSCR_i|  \leq d^+(y_j) - \Delta+b-1$.
        Therefore, we have:
        \begin{align*}
            |L_j| &= |N^+(y_j) \setminus (X_i \cup \zSCR_i)| & \\
            &= d^+(y_j) - |N^+(y_j) \cap X_i| - |N^+(y_j)\cap \zSCR_i| & \\
            &\geq d^+(y_j) - (|X_i|-1) -|N^+(y_j) \cap \zSCR_i| &\text{as $y_j \in X_i \setminus N^+(y_j)$,} \\
            &\geq \Delta-b+2 -|X_i| &\text{as $y_j$ is not a loose $i$-savior,}\\
            &\geq 2b+1 &\text{as $D_i$ is loose.}
        \end{align*}
        For every $j$, let thus $L'_j$ be an arbitrary subset of $L_j$ of size precisely $2b+1$, and let $L= \bigcup_{j\in [b]}L_j'$.
        Let finally $u_1,\dots,u_{\ell}$ be an arbitrary labeling of the vertices in~$L$, where $\ell = |L|$, so $\ell \leq 2b^2+b$.
        
        By Claim~\ref{claim:matching_S}, applied with $X=K_i \setminus\{y_1,\dots,y_b\}$, there exists, in $\overline{D}$, a matching $x_1u_1,\dots,x_{\ell}u_{\ell}$ such that $x_j \in K_i\setminus \{y_1,\dots,y_b\}$ for every $j\in [\ell]$. Note that Claim~\ref{claim:matching_S} can be applied as $|L| \leq 2b^2+b$ and
        \begin{align*}
            |K_i \setminus\{y_1,\dots,y_b\}| &= |X_i| - |U_i| - b \\
            &\geq \Delta-\tfrac{4}{\epsilonb}d -5b &\text{by Claims~\ref{claim:dense_decomposition}\ref{claim:dense_decomposition:i} and~\ref{claim:matching_Di},}\\
            &\geq (1-\tfrac{1}{2}\epsilonb)\Delta &\text{by~\eqref{eq:largeDelta:3}}.
        \end{align*}
            
        Let $\phi$ be a $(\Delta-b+1)$-dicoloring of $D-(K_i\cup L)$, which exists by minimality of $D$. 
        Then, for every $j\in [\ell]$, we extend $\phi$ to $\{x_j,u_j\}$ by choosing for these two vertices a color that is not already appearing in $N^+(x_j) \cup N^+(u_j)$ or in $N^-(x_j) \cup N^-(u_j)$. 
        To see that such a color is indeed available, recall that by construction $u_j \notin \zSCR_i$, which means that $u_j$ has at least $\log^4(\Delta)$ in-neighbors or $\log^4(\Delta)$ out-neighbors in $X_i$.
        Assume that $u_j$ has at least $\log^4(\Delta)$ out-neighbors in $X_i$, the other case being symmetric. 
        Therefore, since the vertices in $X_i\setminus (U_i \cup \{x_1,\dots,x_{j-1}\})$ are uncolored at that step, by Claim~\ref{claim:matching_Di} we get that $u_j$ has at least 
        \[
        \log^4(\Delta) - |U_i| -  (j-1) \geq \log^4(\Delta) -5b - 2b^2
        \]
        uncolored out-neighbors. Moreover, since $x_j\in K_i$, $x_j$ is linked with digons to all vertices in $K_i$, and in particular $x_j$ has at least 
        \[
            |X_i| -|U_i| - (j-1) \geq |X_i|- 5b -2b^2
        \]
        uncolored out-neighbors. 
        It follows that the number of colors appearing in $N^+(x_j) \cup N^+(u_j)$ is at most
        \begin{align*}
            &d^+(x_j) + d^+(u_j) - (\log^4(\Delta) -5b- 2b^2) - (|X_i|-5b-2b^2)&\\
            &\leq 2\Delta- \log^4(\Delta)+ 4b^2 + 12b + 2  - |X_i|  &\text{by Claim~\ref{claim:degrees},}\\
            &\leq \Delta- \log^4(\Delta)+ \tfrac{4}{\epsilonb}d + 4b^2 + 12b + 2  &\text{by Claim~\ref{claim:dense_decomposition}\ref{claim:dense_decomposition:i},}\\
            &\leq \Delta -b, &\text{by~\eqref{eq:largeDelta:3}.}
        \end{align*}
        as desired.
        We then greedily extend the coloring to the vertices in $K_i \setminus \{y_1,\dots,y_b\}$. Note that, by Claim~\ref{claim:degrees}, each of these vertices has in- or out-degree at most $\Delta$, and has at least $b$ in- and out-neighbors that are still uncolored (namely $y_1,\dots,y_b$). Therefore, the number of colors appearing in both the out- and in-neighborhood of every such vertex is at most $\Delta-b$.

        We finally extend the coloring to the remaining uncolored vertices, namely $y_1,\dots,y_b$, by choosing for each of them a color that is not appearing in its out-neighborhood. Recall that $y_j$ dominates $L_j$ (and in particular, it dominates $L_j'$) and $K_i \setminus y_j$. By construction, every color used on $L_j'$ is also used on $K_i\setminus y_j$. Therefore, since $d^+(y_j) \leq \Delta+b+1$ by Claim~\ref{claim:degrees}, the number of colors appearing in the out-neighborhood of $y_j$ is at most 
        \[
            d^+(y_j) - |L_j'| \leq (\Delta+b+1) - (2b+1) = \Delta-b,
        \]
        as desired. This shows that $\phi$ can be extended to $D$, hence showing that $\dic(D) \leq \Delta-b+1$, a contradiction.
    \end{proofclaim}

    \subsection{Structure of the tight dense sets}

    We now move to the more technical case of tight $D_i$'s. The overall proof follows that for loose $D_i$'s, except that it requires a more careful analysis of the number of colors that can be ``saved'' on a savior.

    The reader may note that the structural results of this section also hold for loose $D_i$'s. However, in order to make use of them in the next section, we need to identify certain vertices so as to ensure that they receive the same color via the random coloring process. Such identifications must be carried out carefully, as we need to ensure that the general structure of $D$ is not affected too much. This is guaranteed when identifying two vertices within the same tight $D_i$, but not within a loose $D_i$.

    \medskip

    \begin{figure}
        \centering
        \begin{tikzpicture}
            \node[draw, very thick, minimum width=2cm, minimum height=0.7cm, rectangle, rounded corners=8pt, outer sep=2pt] (Yi) at (0,2) {$Y_i$};
            \node[draw, very thick, minimum width=4cm, minimum height=1.2cm, rectangle, rounded corners=8pt, outer sep=2pt] (Xi) at (0,3.8) {$K_i^\star\setminus Y_i $};
            \node[draw, very thick, minimum width=1cm, minimum height=3cm, rectangle, rounded corners=8pt, outer sep=2pt] (Ri) at (6,2) {$R_i$};
            \foreach \i in {-3,...,3} {
                \node[vertex] (u\i) at (\i*0.6,0.2) {};
                \node[vertex] (v\i) at (\i*0.6,-0.8) {};
                \draw[tedge, orange] (u\i) -- (v\i);
            }
            
            \node[region, thick, fit=(u-3)(v3), label=left:{$U_i^\star$}] (Uis) {};
            
            \draw[digon] (Uis) to (Yi);
            \draw[digon] (Xi) -- (Yi);
            \draw[digon, dashed] (Uis.143) to[out=90, in=-90] (Xi.-155);

            \draw[dotted, thick, gray] (Ri) -- (Yi);
            \draw[dotted, thick, gray] (Ri) to[out=180, in=-30] (Xi);
            \draw[dotted, thick, gray] (Ri) to[out=180, in=40] (Uis);
            \draw[dotted, thick, gray] (Ri.180) to[out=180, in=90] (4.3,1.5);
            \draw[dotted, thick, gray] (4.3,1.5) to[out=-90, in=180] (Ri.240);

            \node[gray] at (4.7,2.3) {$\geq \fb$};
        \end{tikzpicture}
        \caption{An illustration of $R_i$, $U_i^\star$, $K_i^\star$, and $Y_i$. The matching $M_i^\star$ is highlighted in orange. A solid digon between two sets illustrates the presence of all possible such digons. The dashed digon illustrates that vertices in $U_i^\star$ are linked with digons to almost all vertices in $K_i^\star$. The dotted lines illustrate that $|X_i\setminus N^\pm(r)|\geq \fb$ for every $r\in R_i$.}
        \label{fig:partition_Xi}
    \end{figure}

    We start with a few technical definitions, see Figure~\ref{fig:partition_Xi} for an illustration of these.
    For every $i\in [t]$ we let $R_i$ be the set of vertices in $X_i$ that miss $13b$ digons in $X_i$, that is
    \[
        R_i = \{x\in X_i : |X_i\setminus N^\pm(x)|  \geq \fb\}.
    \]
    Note that $R_i \subseteq U_i$: since $K_i$ forms a biclique, every vertex $x\in K_i$ satisfies $|N^\pm(x)\setminus X_i| \leq |U_i|$ while $|U_i| < \fb$ by Claim~\ref{claim:matching_Di}.
    In particular, since $R_i \subseteq U_i$ then $|R_i| \leq 4b$ by Claim~\ref{claim:matching_Di}. 
    
    Then, we let $M_i^\star$ be maximum matching of $\overline{D}[X_i \setminus R_i]$, we let $\nu_i^\star = |M_i^\star|$, and we let $U_i^\star$ be the set of vertices spanned by $M_i^\star$. Observe that $M_i^\star$ is also a matching of $\overline{D_i}$, so by Claim~\ref{claim:matching_Di},
    \[
        |U_i^\star| = 2\nu_i^\star \leq 2\nu_i \leq 4b.
    \]
    We further let $K_i^\star = X_i \setminus (R_i \cup U_i^\star)$. Observe that, by definition of $M_i^\star$, $K_i^\star$ is a biclique.
    Finally, we let 
    \[
        Y_i = K_i^\star \cap \bigcap_{u\in U_i^\star} N^\pm(u).
    \]
    By construction, we have the following three claims that will be useful in the probabilistic analysis later.
    
    \begin{claim}
    	\label{claim:size_K}
    	For every integer $i\in [t]$, $|K_i| \geq \Delta-\tfrac{4}{\epsilonb}d - 4b$ and $|K_i^\star| \geq \Delta-\tfrac{4}{\epsilonb}d - 8b$.
    \end{claim}
    \begin{proofclaim}
    	By definition, $K_i = X_i \setminus U_i$, and $K_i^\star = X_i \setminus (U_i^\star \cup R_i)$. Since $R_i\subseteq U_i$, and both $U_i$, $U_i^\star$ have size at most $4b$ by Claim~\ref{claim:matching_Di}, the result follows from Claim~\ref{claim:dense_decomposition}\ref{claim:dense_decomposition:i}. 
    \end{proofclaim}
    
    \begin{claim}
        \label{claim:adjacency_Uis}
        For every $i\in [t]$ and every $u\in U_i^\star$, we have $|X_i\setminus N^\pm(u)| < \fb$ and $Y_i \subseteq N^\pm(u)$. Moreover, for every $i\in [t]$, $|Y_i| \geq \frac{3}{4}\Delta$.
    \end{claim}
    \begin{proofclaim}
        We have that $|N^\pm(u)\setminus X_i| < \fb$ as $U_i^\star$ is chosen explicitly in $X_i \setminus R_i$, and $Y_i \subseteq N^\pm(u)$ by definition.
        For the last inequality, we have
        \begin{align*}
            |Y_i| &\geq |K_i^\star| - \fb \cdot |U_i^\star|\\
            &\geq \Delta -\tfrac{4}{\epsilonb}d- 8b - 52b^2 &\text{by Claims~\ref{claim:size_K} and~\ref{claim:matching_Di},}\\
            &\geq\tfrac{3}{4}\Delta &\text{by~\eqref{eq:largeDelta:3}.}
        \end{align*}
        The claim follows.
    \end{proofclaim}

    \begin{claim}
        \label{claim:adjacency_Uis_bis}
        For every $i\in [t]$ and every $uv\in M_i^\star$, if $D_i$ is tight then $|N^\pm(u)\cap N^\pm(v)| \geq \Delta-29b+2$.
    \end{claim}
    \begin{proofclaim}
        We have 
        \begin{align*}
            |N^\pm_D(u) \cap N^\pm_D(v)| &\geq |X_i| - |X_i \setminus N^\pm(u)|- |X_i \setminus N^\pm(v)|\\
            &> |X_i| - 26b &\text{by Claim~\ref{claim:adjacency_Uis},}\\
            &\geq \Delta -29b+2 &\text{as $D_i$ is tight.}
        \end{align*}
        The claim follows.
    \end{proofclaim}

    We are finally ready to introduce the definition of saviors. We say that a vertex $x\in X_i$ is a {\it tight $i^+$-savior} if each of the following holds:
    \begin{itemize}
        \item $x\in Y_i$,
        \item $d^+(x)\leq \Delta$, and
        \item $|N^+(x) \cap \zSCR_i| \geq d^+(x) - \Delta +b-\nu_i^\star$.
    \end{itemize} 
    Similarly, we say that $x\in X_i$ is a {\it tight $i^-$-savior} if each of the following holds:
    \begin{itemize}
        \item $x\in Y_i$,
        \item $d^-(x)\leq \Delta$, and
        \item $|N^-(x) \cap \zSCR_i| \geq d^-(x) - \Delta +b-\nu_i^\star$.
    \end{itemize} 
    Finally, we say that $x\in X_i$ is a {\it tight $i$-savior} if it is a tight $i^+$-savior or a tight $i^-$-savior. We conclude this part with the following key structural result.

    \begin{claim}
        \label{claim:nb_tight_saviors}
        For every $i\in [t]$, there exist at least $\frac{1}{2}\Delta$ distinct tight $i$-saviors.
    \end{claim}
    \begin{proofclaim}
        We prove that all vertices in $Y_i$ are tight $i$-saviors except at most $b-1$ of them, hence showing that the number of tight $i$-saviors is at least 
        \[
            |Y_i| - b+1 \geq \tfrac{3}{4}\Delta - b+1 \geq \tfrac{1}{2}\Delta,
        \]
        where first inequality follows from Claim~\ref{claim:adjacency_Uis}, and the second one from~\eqref{eq:largeDelta:3}. Assume for a contradiction that there exist $b$ distinct vertices $y_1,\dots,y_b\in Y_i$, none of which is a tight $i$-savior. 
        For every $j\in [b]$, let
        \[
            N_j =  
            \begin{cases}
                N^+(y_j) & \text{if $d^+(y_j) \leq \Delta$}\\
                N^-(y_j) & \text{otherwise.}
            \end{cases}
        \]
        Observe that $|N_j| \leq \Delta$ by Claim~\ref{claim:degrees}. Further, for every $j\in [b]$, let 
        \[
            L_j = N_j \setminus (X_i \cup \zSCR_i).
        \]
        Note also that, by definition, since $y_j$ is not a tight $i$-savior, we have 
        \[
        |N_j \cap \zSCR_i|  \leq |N_j| -\Delta + b-\nu^\star_i-1.
        \]
        Therefore, we can lower bound the size of $L_j$ as follows:
        \begin{align*}
            |L_j| &= |N_j \setminus (X_i \cup \zSCR_i)| & \\
            &= |N_j| - |N_j \cap K_i^\star| - |N_j\cap U_i^\star| -|N_j\cap \zSCR_i|- |N_j\cap R_i| & \\
            &= |N_j| - |K_i^\star| + 1 - 2\nu_i^\star - |N_j\cap \zSCR_i|- |N_j\cap R_i|&\text{as $N_j \cap K_i^\star = K_i^\star \setminus \{y_j\}$,} \\
            &\geq - |K_i^\star| +2+ \Delta-b - \nu_i^\star- |N_j\cap R_i| &\text{as $y_j$ is not a tight $i$-savior,}\\
            &\geq b-\nu_i^\star- |N_j\cap R_i| &\text{as $K_i^\star$ is a biclique,}
        \end{align*}
        where in the last inequality we further use the assumption that $\bic(D) \leq \Delta-2b+2$.
        For every $j$, let thus $L_j'$ be an arbitrary subset of $L_j$ of size precisely $b-\nu_i^\star- |N_j\cap R_i|$, and let $L= \bigcup_{j\in [b]}L_j'$. Let $\ell = |L|$ and observe that $\ell \leq b^2$.
        Let finally $u_1,\dots,u_{\ell}$ be an arbitrary labeling of the vertices in $L$.

        Let $M_R$ be a matching between $R_i$ and $K_i^\star \setminus \{y_1,\dots,y_b\}$ in $\overline{D}$. Note that such a matching exists, as it can be constructed greedily. Indeed, for every $r\in R_i$, we have
        \begin{align*}
            |(K_i^\star \setminus \{y_1,\dots,y_b\}) \setminus N^\pm(r)| &\geq |X_i\setminus N^\pm(r)| - |U_i^\star| - |R_i| - b\\
            &\geq \fb - 4b - 4b- b &\text{by definition of $R_i$,}\\
            &\geq 4b\\
            &\geq |R_i|.
        \end{align*}
        Let $U_R$ be the set of vertices spanned by $M_R$.
        By Claim~\ref{claim:matching_S} applied with 
        \[
        X= K_i^\star \setminus (U_R \cup \{y_1,\dots,y_b\}),
        \]
        there exists, in $\overline{D}$, a matching $\{x_1u_1,\dots x_\ell u_\ell\}$, vertex-disjoint from $M_R$, such that $x_j \in K_i^\star \setminus \{y_1,\dots,y_b\}$ for every $j\in [\ell]$. Note that it is possible to apply Claim~\ref{claim:matching_S} because $|L|\leq b^2$ and
        \begin{align*}
            |K_i^\star \setminus (U_R \cup \{y_1,\dots,y_b\})| &= |K_i^\star| - |R_i| - b \\
            &\geq \Delta-\tfrac{4}{\epsilonb}d - 13b&\text{by Claim~\ref{claim:size_K},}\\
            &\geq (1-\tfrac{1}{2}\epsilonb)\Delta &\text{by~\eqref{eq:largeDelta:3}}.
        \end{align*}
        
        Let $\phi$ be a $(\Delta-b+1)$-dicoloring of $D-((X_i \cup L) \setminus (U_i^\star \cup U_R) )$, with the extra property that $\phi(u) = \phi(v)$ for every $uv \in M_i^\star \cup M_R$. Note that such a dicoloring exists by Claims~\ref{claim:extending_matching} and~\ref{claim:matching_Di} and the fact that $M_i^\star \cup M_R$ is a matching of $\overline{D_i}$ (up to uncoloring the vertices in $L$).

        For every $j\in [\ell]$, we extend $\phi$ to $\{x_j,u_j\}$ by choosing for these two vertices a color that is not already appearing in $N^+(x_j) \cup N^+(u_j)$ or in $N^-(x_j) \cup N^-(u_j)$. 
        To see that such a color is available, recall that by construction $u_j \notin \zSCR_i$, which means that $u_j$ has at least $\log^4(\Delta)$ in-neighbors or $\log^4(\Delta)$ out-neighbors in $X_i$.
        Assume that $u_j$ has at least $\log^4(\Delta)$ out-neighbors in $X_i$, the other case being symmetric. 
        Therefore, since the vertices in $X_i\setminus (U_i^\star \cup U_R\cup  \{x_1,\dots,x_{j-1}\})$ are uncolored at that step, we get that $u_j$ has at least 
        \[
        \log^4(\Delta) - |U_i^\star| - |U_R| -  (j-1) \geq \log^4(\Delta) -4b - b^2
        \]
        uncolored out-neighbors, where in the inequality we use Claim~\ref{claim:matching_Di} and the fact that $M_i^\star \cup M_R$ is a matching of $\overline{D_i}$. Moreover, since $x_j\in K_i^\star$, $x_j$ is linked with digons to all vertices in $K_i^\star$, and in particular $x_j$ has at least 
        \[
            |K_i^\star| - |U_R\cap K_i^\star| -  (j-1) \geq |K_i^\star|-2b - b^2
        \]
        uncolored out-neighbors. 
        It follows that the number of colors appearing in $N^+(x_j) \cup N^+(u_j)$ is at most
        \begin{align*}
            &d^+(x_j) + d^+(u_j) - \log^4(\Delta)- |K_i^\star| + 6b +2b^2\\
            &\leq \Delta- \log^4(\Delta) + \tfrac{4}{\epsilonb}d+  2+16b + 2b^2  &\text{by Claims~\ref{claim:degrees} and~\ref{claim:size_K}}\\
            &\leq \Delta -b, &\text{by~\eqref{eq:largeDelta:3}}
        \end{align*}
        as desired.
        We then greedily extend the coloring to the uncolored vertices in $K_i^\star \setminus  \{y_1, \dots, y_b\}$. Note that, by Claim~\ref{claim:degrees}, each of these vertices has in- or out-degree at most $\Delta$, and has at least $b$ in- and out-neighbors that are still uncolored (namely $y_1,\dots,y_b$). Therefore, the number of colors appearing in both the out- and in-neighborhoods of every such vertex is at most $\Delta-b$.

        We finally extend the coloring to the remaining uncolored vertices, namely $y_1,\dots,y_b$, by choosing for $y_j$ a color that is not appearing in $N_j$.
        Recall that, by construction, we have:
        \begin{itemize}
            \item $U_i^\star\subseteq N_j$,
            \item $L_j' \subseteq N_j$, and
            \item $U_R \cap X_i^\star \subseteq N_j$. 
        \end{itemize}
        Therefore, by construction of the current partial dicoloring, the number of colors appearing in $N_j$ is at most 
        \[
            |N_j| - \nu_i^\star - |L_j'| - |N_j \cap R_i| \leq |N_j| -b \leq \Delta-b
        \]
        by choice of $L_j'$ and the fact that $|N_j|\leq \Delta$. This shows that $\phi$ can be extended to $D$, hence showing that $\dic(D) \leq \Delta-b+1$, a contradiction.
    \end{proofclaim}

    \subsection{The random coloring process}

    The goal of this subsection is to define a specific random coloring process. In the next subsection, using probabilistic methods, we leverage on the structural properties obtained previously to show that, with positive probability, the partial dicoloring obtained by this process can be extended to $D$, hence contradicting $\dic(D) > \Delta-b+1$.
    
    Given a partial coloring $\psi$ of a digraph $H$, we denote by $\dom(\psi)$ its domain. For any subset of vertices $X\subseteq V(H)$, we denote by $\psi(X)$ the set of colors given by $\psi$ on $X$, that is
    \[
    \psi(X) = \{ \psi(x) : x\in X\cap \dom(\psi)\}.
    \]
    
    From now on, let $\iCAL$ be the set of indices $i\in [t]$ such that $D_i$ is tight.
    Let $M^\star = \bigcup_{i\in \iCAL}M_i^\star$. Let $D^\star$ be the digraph obtained from $D$ by identifying each $\{u,v\}\in M^\star$ into a single vertex $x_{u,v}$.
    Assign to each vertex $v\in V(D^\star)$ a color from $[1,\lceil \Delta/2 \rceil]$ drawn independently and uniformly at random, and let $\phi'$ be the obtained coloring. Note that $\lceil\Delta/2\rceil < \Delta-b+1$ by~\eqref{eq:largeDelta:3}. Then, we simultaneously uncolor all the vertices $v\in V(D^\star)$ whose color appear in both the in-neighborhood and the out-neighborhood of $v$, that is,s such that
    \[
    \phi'(v) \in \phi'\Big(N^-_{D^\star}(v)\Big) \cap \phi'\Big(N^+_{D^\star}(v)\Big),
    \]
    and we let $\phi^\star$ be the resulting partial coloring. Note that $\phi^\star$ is a partial $(\Delta-b+1)$-dicoloring of $D^\star$, as any vertex is a sink or a source in its color class. Finally, let $\phi$ be the partial coloring of $D$ naturally derived from $\phi^\star$, that is 
    \begin{align*}
    	\dom(\phi) = 
    	&~\Big\{v \in V(D) \cap V(D^\star) : v\in \dom(\phi^\star)\Big\}\\
    	&\cup \Big\{u,v : \{u,v\}\in M^\star \text{~and~} x_{u,v} \in \dom(\phi^\star)\Big\},
    \end{align*}
    and for every $v\in \dom(\phi)$,
    \[
    \phi(v) = 
    \begin{cases}
    	\phi^\star(v) & \text{if $v\in V(D^\star)$}\\
    	\phi^\star(x_{u,v}) & \text{otherwise, where $u$ is such that $\{u,v\} \in M^\star$.}
    \end{cases}
    \]
    Note that $\phi$ is a partial $(\Delta-b+1)$-dicoloring of $D$ by Lemma~\ref{lemma:coloring_contraction}.
    For every vertex $s\in S$, we say that $\phi$ is {\it $s$-extendable} if at least one of the following holds:
    \begin{itemize}
        \item $\big|M^\star \cap \binom{N^+_D(s)}{2}\big| \geq 2b+1$, or
        \item $\big|\dom(\phi) \cap N^+_D(s)\big| - \big|\phi\big(N^+_D(s)\big)\big| \geq 2b+1$.
    \end{itemize}
    Note that the first property does not depend on $\phi$.

    \medskip
    
    For every $i\in [t]$, we say that a vertex $x$ is a {\it loose $i$-rescuer} (with respect to $\phi$) if $x\in K_i$, $x$ is uncolored (that is, $x\notin \dom(\phi)$), and 
    \[
    \big|\dom(\phi) \cap N^+_D(x)\big| - \big|\phi\big(N^+_D(x)\big)\big| \geq d^+_D(x)-\Delta +b.
    \]
    We say that $\phi$ is {\it loosely $i$-extendable} if there exist at least $b$ distinct loose $i$-rescuers.

    \medskip
    
    Similarly, a vertex $x$ is a {\it tight $i$-rescuer} if $x\in Y_i$, $x$ is uncolored, and at least one of the following holds:
    \begin{itemize}
        \item $d^+_D(x) \leq \Delta$ and $\big|\dom(\phi) \cap N^+_D(x) \setminus U_i^\star\big| - \big|\phi\big(N^+_D(x) \setminus U_i^\star\big)\big| \geq d^+_D(x)-\Delta +b - \nu_i^\star$; or
        \item $d^-_D(x) \leq \Delta$ and $\big|\dom(\phi) \cap N^-_D(x) \setminus U_i^\star\big| - \big|\phi\big(N^-_D(x) \setminus U_i^\star\big)\big| \geq d^-_D(x)-\Delta +b-\nu_i^\star$.
    \end{itemize}
    We further say that $\phi$ is {\it tightly $i$-extendable} if there exist at least $b$ tight $i$-rescuers. 

    \medskip
    
    We say that $\phi$ is {\it $i$-extendable if}
    \begin{itemize}
        \item $D_i$ is loose and $\phi$ is loosely $i$-extendable; or
        \item $D_i$ is tight and $\phi$ is tightly $i$-extendable.
    \end{itemize}

    Finally, we say that $\phi$ is {\it extendable} if it is $s$-extendable for every $s\in S$ and $i$-extendable for every $i\in [t]$.
    We conclude this section by showing that $\phi$ cannot be extendable.

    \begin{claim}
        \label{claim:no_extendable_dicoloring}
        If $\phi$ is extendable, then $\dic(D) \leq \Delta-b+1$.
    \end{claim}
    \begin{proofclaim}
        We show that $\phi$ can be extended to $D$.
        We first extend $\phi$ to $X_i$ for every $i\in [t]$ as follows. 
        Recall that $\phi$ uses at most $\lceil \Delta/2\rceil$ colors, which in particular implies that $|\dom(\phi) \cap X_i| \leq \frac{2}{3}\Delta$. Indeed, if this is not the case, then
        \[
        \tfrac{2}{3}\Delta < |\dom(\phi) \cap X_i| \leq |\dom(\phi)\cap K_i| + |U_i| \leq |\dom(\phi)\cap K_i| + 4b,
        \]
        which implies that at least $\frac{2}{3}\Delta - 4b > \lceil \frac{1}{2}\Delta\rceil $ vertices of $K_i$ are colored by $\phi$. In particular, two vertices of $K_i$ are colored identically in $\phi$. Since $K_i$ is a biclique, this is a contradiction to $\phi$ being a partial dicoloring. 
        
        Assume first that $D_i$ is loose, so $\phi$ is loosely $i$-extendable, and let $\lSCR$ be a set of $b$ loose $i$-rescuers. We first extend the dicoloring to each vertex $u\in U_i \setminus \dom(\phi)$ by a color that is not appearing in $N^+(u)$. This is possible as the number of colors appearing in $N^+_D(u)$ is at most
        \begin{align*}
            &|\dom(\phi) \cap X_i| + |N^+_S(u) \setminus X_i| + |U_i|\\
            &\leq \tfrac{2}{3}\Delta + d^+_D(u) - |N^+_D(u) \cap X_i| + |U_i|\\
            &\leq (\tfrac{2}{3}+\epsilonb)\Delta +6b+1 &\text{by Claims~\ref{claim:degrees}, \ref{claim:dense_decomposition}\ref{claim:dense_decomposition:ii}, and~\ref{claim:matching_Di},}\\
            &\leq \Delta-b &\text{by choice of $\epsilonb$ and~\eqref{eq:largeDelta:3}.}
        \end{align*}
        We then extend the coloring to $K_i \setminus (\lSCR\cup \dom(\phi))$. This is possible because each of these vertices has in- or out-degree at most $\Delta$, and at least $b$ uncolored in- and out-neighbors, as $\lSCR \subseteq K_i$ and $K_i$ is a biclique. Note that the vertices in $\lSCR$ are indeed uncolored by definition of loose $i$-rescuers.
        We can finally extend the coloring to each vertex $y\in \lSCR$ by choosing a color that is not used on $N^+_D(y)$. To see that this is possible, note that $y$ being a loose $i$-rescuer implies that there exists a set $B \subseteq N^+_D(y)$ of $d^+_D(y)-\Delta+b$ vertices whose colors in $\phi$ appear in $N^+_D(y) \setminus B$. Hence, the number of colors appearing in the out-neighborhood of $y$ is at most 
        \[
            d^+(y) - (d^+(y)-\Delta+b) = \Delta-b,
        \]
        as desired.

        Assume now that $D_i$ is tight, so $\phi$ is tightly $i$-extendable, and let $\lSCR$ be a set of $b$ tight $i$-rescuers.
         We proceed as above, but here we need to ensure that the vertices matched in $M_i^\star$ receive the same color. Note that this is the case for the vertices of $U_i^\star$ that are already colored,
         as by construction of $\phi$, for every $uv\in M_i^\star$, since $u$ and $v$ are identified in $D^\star$, either $\{u,v\} \cap \dom(\phi) = \emptyset$ or $\{u,v\} \subseteq \dom(\phi)$ and $\phi(u) = \phi(v)$.
         For every $uv\in M_i^\star$ such that $\{u,v\} \cap \dom(\phi) =\emptyset$, we extend the dicoloring by giving to both $u$ and $v$ a color that is not appearing in $N^+_D(u) \cup N^+_D(v)$. This is possible as the number of colors appearing in $N^+_D(u)\cup N^+_D(v)$ is at most
        \begin{align*}
            |\dom(\phi) \cap X_i| + |N^+_D(u) \setminus X_i|+ |N^+_D(v) \setminus X_i| + |U_i^\star| \leq (\tfrac{2}{3}+2\epsilonb)\Delta +8b+2 \leq \Delta-b.
        \end{align*}
        by~\eqref{eq:largeDelta:3}. We then extend the dicoloring to $R_i \setminus \dom(\phi)$, which is possible as the number of colors appearing in $N^+(r)$ is at most  
        \begin{align*}
            |\dom(\phi) \cap X_i| + |N^+(r) \setminus X_i|+ |U_i^\star| + |R_i| \leq (\tfrac{2}{3}+\epsilonb)\Delta +10b +1\leq \Delta-b
        \end{align*}
        by~\eqref{eq:largeDelta:3}.  We then extend the coloring to $K_i^\star \setminus (\lSCR\cup \dom(\phi))$, which is again possible because each of these vertices has $b$ uncolored out- and in-neighbors as $K_i^\star$ is a biclique. We finally extend the coloring to each vertex $y\in \lSCR$. To see that this is possible, recall that, by definition of being a tight $i$-rescuer, $y\in Y_i$ and that, by Claim~\ref{claim:adjacency_Uis}, $U_i^\star \subseteq N^\pm_D(y)$. Note also that, as $y$ is a tight $i$-rescuer, at most $\Delta-b+\nu_i^\star$ colors appear in both 
        $N^+_D(y) \setminus U_i^\star$ and $N^-_D(y) \setminus U_i^\star$.
        By construction of the current partial dicoloring, we finally obtain that the maximum number of colors appearing in $N^+_D(y)$ and $N^-_D(y)$ is at most 
        \[
            (\Delta-b+\nu_i^\star)-\tfrac{1}{2}|U_i^\star| = \Delta-b,
        \]
        as desired.

        We finally extend $\phi$ to $S$ by choosing for every vertex $s\in S$ a color that is not appearing in $N^+_D(s)$. Let us justify that such a color is available.
        First, if $|M^\star \cap \binom{N^+_D(s)}{2}| \geq 2b+1$, then since we explicitly gave the same color to the vertices matched in $M^\star$, the number of colors used on $N^+_D(s)$ is at most $d^+_D(s) - (2b+1) \leq \Delta-b$
        by Claim~\ref{claim:degrees}, as desired. Otherwise, if $|M^\star \cap \binom{N^+_D(s)}{2}| < 2b+1$, note that by definition of $\phi$ being $s$-extendable, there exists a set $B_s \subseteq N^+_D(s)$ of $2b+1$ vertices whose colors in $\phi$ appear in $N^+_D(s) \setminus B_s$. Hence, the number of colors appearing in the out-neighborhood of $s$ is again at most $d^+_D(s) - (2b+1) \leq \Delta-b$. 
    \end{proofclaim}

    \subsection{The probabilistic analysis}

    Our final goal is to prove that 
    \[
        \PP(\phi \text{ is extendable}) >0,
    \]
    which together with Claim~\ref{claim:no_extendable_dicoloring} implies that $\dic(D) \leq \Delta-b+1$, a contradiction. 
    To prove this, we show that, for every $s\in S$, $\phi$ is likely to be $s$-extendable, and that for every $i\in [t]$, $\phi$ is likely to be $i$-extendable. We finally conclude that $\phi$ is extendable with positive probability using Lov\'asz Local Lemma.
    
    We make use of the following observation, intuitively saying that the maximum degree in $D^\star$ is still close to $\Delta$.
    
    \begin{claim}
        \label{claim:degrees_Ds}
        Every vertex $x\in V(D^\star)$ satisfies $d^+_{D^\star}(x) \leq \Delta+31b$ and $d^-_{D^\star}(x) \leq \Delta+31b$.
    \end{claim}
    \begin{proofclaim}
        If $x\in V(D^\star)\cap V(D)$, then this is an immediate consequence of Claim~\ref{claim:degrees}, as then identifying vertices cannot increase the degree of $x$.
        Assume thus that $x=x_{u,v}$ corresponds to the identification of $\{u,v\} \in M^\star$. Recall that, by definition of $M^\star$, there exists some $i\in \iCAL$ such that $\{u,v\} \in M_i^\star$ (recall that $\iCAL$ denotes the set of integers $i$ such that $D_i$ is tight). Therefore, we have
        \begin{align*}
            d^+_{D^\star}(x) &\leq d^+_{D}(v) + |N^+_D(u)\setminus N^+_D(v)|\\
            &\leq d^+_D(v) + d^+_D(u) - |N^\pm_D(u) \cap N^\pm_D(v)|\\
            &\leq 2(\Delta+b+1) - |N^\pm_D(u) \cap N^\pm_D(v)| &\text{by Claim~\ref{claim:degrees},}\\
            &\leq \Delta+31b &\text{by Claim~\ref{claim:adjacency_Uis_bis}},
        \end{align*}
        and similarly $d^-_{D^\star}(x) \leq \Delta+31b$,
        as desired.
    \end{proofclaim}

    \subsubsection{Probabilistic analysis: the sparse vertices}

    We first prove that, for every $s\in S$,  $\phi$ is likely to be $s$-extendable. 
    We make use of the following observation, which intuitively says that the vertices in $S$ remain sparse in $D^\star$. Note that $S$ is not spanned by $M^\star$ by construction, so indeed $S\subseteq V(D^\star)$. 

    \begin{claim}
        \label{claim:preserve_sparseness}
        For every vertex $s\in S$, the digraph $D^\star[N^+_{D^\star}(s)]$ contains at most $\Delta^2 - \frac{1}{3}d\Delta$ arcs.
    \end{claim}
    \begin{proofclaim}
        Let $m(s)$ and $m^\star(s)$ denote the number of arcs in $D[N^+_D(s)]$ and $D^\star[N^+_{D^\star}(s)]$ respectively.
        Recall that $\delm$ denotes $\delmax(D)$.
        By Claim~\ref{claim:dense_decomposition}\ref{claim:dense_decomposition:iv} and by definition of being $d/2$-sparse, we have 
        \[
        m(s) \leq \delm(\delm-1) - \tfrac{1}{2}d\delm.
        \]
        
        Observe first that identifying two vertices in $N^+_D(s)$ or two vertices in $V(D)\setminus (N^+_D(s)\cup \{s\})$ does not increase the number of arcs induced by the out-neighborhood of $s$. Let $u_1v_1,\dots,u_rv_r$ be the edges of $M^\star$ with exactly one extremity in $N^+_D(s)$. Assume without loss of generality that $u_j \in N^+_D(s)$ and $v_j \notin N^+_D(s)$ for every $j\in [r]$. 

        Next, observe that if some arc $a$ belongs to $D^\star[N^+_{D^\star}(s)]$ but not to $D[N^+_D(s)]$, necessarily there is some $j\in [r]$ such that $a$ is incident to $v_j$ but not to $u_j$ in $D$. Recall that $u_jv_j \in M_i^\star$ for some $i$ such that $D_i$ is tight. Therefore, 
        \begin{align*}
            m^\star(s) &\leq m(s) + \sum_{j\in [r]} \Big(\big|N^+_{D}(v_j) \setminus N^+_D(u_j)\big| + \big|N^-_{D}(v_j) \setminus N^-_D(u_j)\big|\Big)\\
            &\leq m(s) + \sum_{j\in [r]}\Big( d^+_D(v_j) +d^-_D(v_j) - 2|N^\pm_D(u_j) \cap N^\pm_D(v_j)| \Big)\\
            &\leq m(s) + 60b\cdot r &\text{by Claims~\ref{claim:degrees} and~\ref{claim:adjacency_Uis_bis},}\\
            &\leq m(s) + 60b\cdot \delm\\
            &\leq \delm(\delm-1) - (\tfrac{1}{2}d-60b)\delm \\
            &\leq (\Delta+b+1)(\Delta+b) - (\tfrac{1}{2}d-60b)(\Delta-b) &\text{by Claim~\ref{claim:degrees},} \\
            &\leq \Delta^2 - \tfrac{1}{3}d\Delta &\text{by~\eqref{eq:largeDelta:5},}
        \end{align*}
        as desired.
    \end{proofclaim}
    
    \begin{claim}
        \label{claim:prob_s_extendable}
        For every vertex $s\in S$, $\PP(\text{$\phi$ is not $s$-extendable})\leq \exp(-\log^2(\Delta))$.
    \end{claim}
    \begin{proofclaim}
        Let us fix a vertex $s\in S$. 
        Let $M_s$ be the edges of $M^\star$ included in $N^+_D(s)$, that is $M_s = M^\star \cap \binom{N^+(s)}{2}$.
        We assume that $|M_s| \leq 2b$, for otherwise $\phi$ must be $s$-extendable by definition (that is, $\PP(\text{$\phi$ is $s$-extendable}) = 1$). 
        In particular, we have
        \begin{equation}
            \label{eq:mindeg_sparse}
            d^+_{D^\star}(s) \geq d^+_{D}(s) - 2b \geq \Delta-3b+1.
        \end{equation}
        by Claim~\ref{claim:degrees}.
        For the sake of better readability, in the following let us denote by $N_s$ the set of out-neighbors of $s$ in $D^\star$. 
        Let $Z_s$ denote the number of colors $c\in [\lceil\Delta/2\rceil]$ such that $c$ is used and retained by exactly two vertices in $N_s$, and not used by any other vertex in $N_s$. Formally, $Z_s$ is the number of colors $c\in [\lceil \Delta/2\rceil ]$ for which there exists distinct vertices $x,y\in N_s$ such that:
        \begin{itemize}
            \item $\phi'(x) = \phi'(y) = c$,
            \item $\phi'(z) \neq c$ for every $z\in N_s \setminus \{x,y\}$, and
            \item $\{x,y\} \subseteq \dom(\phi^\star)$.
        \end{itemize}
        Observe that 
        \[
            |\dom(\phi) \cap N^+_D(s) | - |\phi(N^+_D(s))| \geq |\dom(\phi^\star) \cap N^+_{D^\star}(s) | - |\phi(N^+_{D^\star}(s))|\geq Z_s,
        \]
        which implies that
        \[
            \PP(\text{$\phi$ is $s$-extendable}) \geq \PP(Z_s\geq 2b+1) = \PP(Z_s> 2b).
        \]
        We bound the latter term by first proving that the expectation of $Z_s$ is large and then that $ Z_s$ is concentrated around its expectation. 
        Let $\bSCR_s$ be the set of pairs $\{u,v\} \subseteq N_s$ such that $D^\star$ contains at most one arc between $u$ and $v$.
        
        \begin{subclaim}
            \label{subclaim:sparse_1}
            We have $\EE(Z_s) \geq \frac{1}{2^{17}\Delta}|\bSCR_s|$.
        \end{subclaim}
        \begin{proofsubclaim}
            For every pair of vertices $\{x,y\} \in \bSCR_s$ and every color $c\in [\lceil\Delta/2\rceil]$, we let $A_{\{x,y\},c}$ be the event that $x,y$ are the unique vertices colored $c$ in $N_s$ and that they both retain their color. We let $Z_{\{x,y\},c}$ be the binary random variable equals to $1$ if $A_{\{x,y\},c}$ holds and $0$ otherwise. By definition, we have 
            \[
                Z_s = \sum_{\substack{\{x,y\}\in \bSCR_s,\\c\in [\lceil\Delta/2\rceil]}} Z_{\{x,y\},c}.
            \]
            Note that $A_{\{x,y\},c}$ holds in particular if $\phi'(x) = \phi'(y) = c$ and $\phi'(w)\neq c$ for every $w\in (N_s \cup N^+_{D^\star}(x)\cup N^+_{D^\star}(y))\setminus \{x,y\}$. Therefore, using that $\frac{\Delta}{2}\leq \lceil \frac{\Delta}{2}\rceil \leq \Delta$ we have
            \begin{align*}
                \EE(Z_{\{x,y\},c}) = \PP(A_{\{x,y\},c}) &\geq \left(\frac{1}{\Delta}\right)^2\cdot \left(1-\frac{2}{\Delta}\right)^{3\Delta+93b} &\text{by Claim~\ref{claim:degrees_Ds},}\\
                &\geq \frac{1}{\Delta^2}\cdot \left(1-\frac{2}{\Delta}\right)^{4\Delta} &\text{by~\eqref{eq:largeDelta:3},}\\
                &\geq \frac{1}{2^{16}\Delta^2}&\text{by~\eqref{eq:largeDelta:6}.}
            \end{align*}
            By linearity of the expectation, it follows that 
            \[
                \EE(Z_s) \geq \frac{\Delta}{2}\cdot |\bSCR_s| \cdot \frac{1}{2^{16}\Delta^2} = \frac{|\bSCR_s|}{2^{17}\Delta},
            \]
            as desired.
        \end{proofsubclaim}
        \begin{subclaim}
            \label{subclaim:sparse_2}
            For any real number $\lambda> 714\sqrt{|\bSCR_s|/\Delta} + 2752$, we have
            \[
            \PP(|Z_s-\EE(Z_s)|>\lambda)\leq 8\exp\left(\frac{-\lambda^2\Delta}{1024|\bSCR_s| + 256\lambda\Delta}\right).
            \]
        \end{subclaim}
        \begin{proofsubclaim}
            We consider two auxiliary random variables.
            Let $\Zsp$ be the number of colors $c$ such that, for some $\{x,y\} \in \bSCR_s$, $\phi'(x) = \phi'(y) =c$.
            Let $\Zsm$ be the number of colors $c$ such that, for some $\{x,y\} \in \bSCR_s$, $\phi'(x) = \phi'(y)=c$ and 
            \begin{itemize}
                \item  $c$ is not retain by at least one of $\{x,y\}$, that is $\{x,y\} \nsubseteq \dom(\phi^\star)$; or
                \item there exists $z\in N_s\setminus \{x,y\}$ such that $\phi'(z) =c$.
            \end{itemize}
            Observe that $Z_s = \Zsp - \Zsm$. We show that these auxiliary random variables are concentrated, and then deduce that $Z_s$ is concentrated as well.
    
            Given $\{x,y\}\in \bSCR_s$ and a color $c\in[\lceil\Delta/2\rceil]$, let $A^{\add}_{\{x,y\},c}$ be the event that $x$ and $y$ are assigned color $c$, and let $Z^{\add}_{\{x,y\},c}$ be the random variable equal to $1$ if $A^{\add}_{\{x,y\},c}$ holds, and $0$ otherwise.
            We have that 
            \[
            \Zsp \leq \sum_{\substack{\{x,y\}\in \bSCR_s,\\c\in [\lceil \Delta/2\rceil]}} Z^{\add}_{\{x,y\},c}.
            \]
            Therefore, by linearity of the expectation, 
            \begin{equation}
            \label{eq:EM^*}
            \EE(\Zsp) \leq \sum_{\substack{\{x,y\}\in \bSCR_s,\\c\in [\lceil \Delta/2\rceil]}} \PP(A^{\add}_{\{x,y\},c}) = \lceil\Delta/2\rceil \cdot |\bSCR_s| \cdot \left(\frac{1}{\lceil\Delta/2\rceil}\right)^2 \leq \frac{2}{\Delta}\cdot |\bSCR_s|.
            \end{equation}
            
            We note that changing the outcome of any color assignment affects $\Zsp$ by at most $1$. Moreover, whenever $\Zsp\geq k$ for some integer $k$, this can be certified by revealing at most $2k$ color assignments: for each color $c$ counted by $\Zsp$, it suffices to exhibit two vertices $x$ and $y$ with $\{x,y\}\in \bSCR_s$ and $\phi'(x) = \phi'(y)=c$. Therefore, by Lemma~\ref{lemma:talagrand} together with~\eqref{eq:EM^*}, we get
            \begin{equation}\label{eq:M^*}
                \PP(|\Zsp - \EE(\Zsp)|>\lambda)\leq 4\exp\left(\frac{-\lambda^2 \Delta}{128|\bSCR_s|+64\lambda\Delta}\right)
            \end{equation}
            for any real number $\lambda > 252\sqrt{|\bSCR_s|/\Delta}+688$.
    
            Similarly, $\Zsm$ is affected by at most $1$ when the outcome of any color assignment is changed.
            Moreover, whenever $\Zsm \geq k$, this can be certified by revealing at most $4k$ color assignments: for each one of the $k$ colors, it suffices to exhibit two vertices $x$ and $y$ with $\{x,y\}\in \bSCR_s$, and either
            \begin{itemize}
                \item one in-neighbor $x^-$ and one out-neighbor $x^+$ of $x$, or
                \item a vertex $z \in N_s \setminus \{x,y\}$,
            \end{itemize}
            such that all such vertices have been assigned that color. 
            By Lemma~\ref{lemma:talagrand},~\eqref{eq:EM^*}, and the fact that $\Zsm \leq \Zsp$ by definition, we have
            \begin{equation}\label{eq:M^del}
                \PP(|\Zsm-\EE(\Zsm)|>\lambda)\leq 4\exp\left(\frac{-\lambda^2 \Delta}{256|\bSCR_s|+128\lambda\Delta}\right)
            \end{equation}
            for any $\lambda> 357\sqrt{|\bSCR_s|/\Delta} + 1376$. 
            Recall that $Z_s = \Zsp-\Zsm$, hence by~\eqref{eq:M^*} and~\eqref{eq:M^del} we obtain
            \begin{align*}
            \PP(|Z_s-\EE(Z_s)|>\lambda)
            &\leq\PP\left(|\Zsp-\EE(\Zsp)|>\tfrac{\lambda}{2}\right) +\PP\left(|\Zsm-\EE(\Zsm)|>\tfrac{\lambda}{2}\right) \\
            &\leq 8\exp\left(\frac{-\lambda^2\Delta}{1024|\bSCR_s| + 256\lambda\Delta}\right)
            \end{align*}
            for any $\lambda> 714\sqrt{|\bSCR_s|/\Delta} + 2752$. The subclaim follows.
        \end{proofsubclaim}

        To make use of Subclaims~\ref{subclaim:sparse_1} and~\ref{subclaim:sparse_2}, we need to  estimate $|\bSCR_s|$. For this, let $m_s$ denote the number of arcs in $D^\star[N_s]$. Since there are at most two arcs between any two vertices, we have
        \begin{align*}
            |\bSCR_s| &\geq \textstyle \binom{d^+_{D^\star}(s)}{2} - \tfrac{1}{2} m_s \\
            &\geq \tfrac{1}{2}(\Delta-3b)^2 - \tfrac{1}{2}m_s &\text{by~\eqref{eq:mindeg_sparse},}\\
            &\geq \tfrac{1}{2}(\Delta-3b)^2 - \tfrac{1}{2}\Delta^2 + \tfrac{1}{6}d\Delta &\text{by Claim~\ref{claim:preserve_sparseness},}\\
            &= \tfrac{1}{6}d\Delta - 3b\Delta + \tfrac{9}{2}b^2\\
            &\geq \tfrac{1}{8}d\Delta  &\text{by~\eqref{eq:largeDelta:5}.}
        \end{align*}
        Together with Subclaim~\ref{subclaim:sparse_1}, we thus obtain 
        $\EE(Z_s) \geq \frac{1}{2^{20}}d$. In particular, $\frac{1}{2}\EE(Z_s) \geq 2b+1$ by~\eqref{eq:largeDelta:4}, which implies that
        \[
            \PP(Z_s < 2b+1) \leq \PP(|Z_s - \EE(Z_s)| > \tfrac{1}{2}\EE(Z_s)).
        \]
        Let thus $\lambda = \tfrac{1}{2}\EE(Z_s)$.
        Note that, by~\eqref{eq:largeDelta:5}, together with the fact that $|\bSCR_s| \geq \frac{1}{8}d\Delta$, we have
        \[
            2^{-16} \cdot\sqrt{|\bSCR_s|} \geq 715\sqrt{\Delta}.
        \]
        By multiplying each side of the inequality by $\sqrt{|\bSCR_s|}/\Delta$, we get that
        \begin{align*}
            \lambda  \geq 2^{-16} \cdot \frac{|\bSCR_s|}{\Delta} &\geq 715\cdot \sqrt{\frac{|\bSCR_s|}{\Delta}}
            \geq  714\cdot \sqrt{\frac{|\bSCR_s|}{\Delta}} + \sqrt{d/8}
            > 714\cdot \sqrt{\frac{|\bSCR_s|}{\Delta}} +2752,
        \end{align*}
        where the last inequality follows from~\eqref{eq:largeDelta:5}.
        Therefore, Subclaim~\ref{subclaim:sparse_2} can be applied with $\lambda = \frac{1}{2}\EE(Z_s)$, and we have
        \begin{align*}
            \PP(\text{$\phi$ is not $s$-extendable}) &\leq \PP(Z_s<2b+1)\\
            &\leq \PP(|Z_s-\EE(Z_s)| > \lambda)\\
            &\leq 8\exp\left(\frac{-\lambda\Delta}{1024\frac{|\bSCR_s|}{\lambda} + 256\Delta}\right) &\text{by Subclaim~\ref{subclaim:sparse_2},}\\
            &\leq 8\exp\left(\frac{-|\bSCR_s|}{2^{46}\Delta+ 2^{26}\Delta}\right) &\text{by Subclaim~\ref{subclaim:sparse_1},}\\
            &\leq 8\exp\left(-2^{-50}d\right) &\text{as $|\bSCR_s|\geq \tfrac{1}{8}d\Delta$,}\\
            &\leq \exp({-\log^2(\Delta)}) &\text{by~\eqref{eq:largeDelta:8}.}
        \end{align*}
        The claim follows.
    \end{proofclaim}

    \subsubsection{Probabilistic analysis: the dense sets}

    We now prove that, for every $i\in [t]$, $\phi$ is likely to be $i$-extendable.
    
    \begin{claim}
        \label{claim:prob_i_extendable}
        For every integer $i\in [t]$, $\PP(\text{$\phi$ is not $i$-extendable}) \leq \exp(- \log^2(\Delta))$.
    \end{claim}
    \begin{proofclaim}
        Let us fix $i\in [t]$. To prove the claim, we only have to prove that, with high probability, there exist at least $b$ loose $i$-rescuers (if $D_i$ is loose) or at least $b$ tight $i$-rescuers (if $D_i$ is tight).
        We first show that the expected number of $i$-rescuers is large, and then that this number is concentrated, hence showing that the number of $i$-rescuers is likely to be at least $b$.

        In order to avoid duplicating arguments between the loose and tight cases, let us provide a few more definitions. 
        First, if $D_i$ is loose, we let $K$ be $K_i$, otherwise we let $K$ be $K_i^\star$.
        Next, if $D_i$ is loose, let $\mathcal{Y}$ be the set of loose $i$-saviors, otherwise let $\mathcal{Y}$ be the set of tight $i$-saviors.

        Note that, in either case, $\mathcal{Y} \subseteq V(D) \cap V(D^\star)$, that is, the vertices in $\mathcal{Y}$ are not identified with another vertex when constructing $D^\star$. Indeed, if some $y\in \mathcal{Y}$ is identified with another vertex, it means that $y$ is spanned by $M^\star$. By definition, this happens only if $i\in \mathcal{I}$ and $y\in U_i^\star$. This is a contradiction, as if $i\in \mathcal{I}$, then $y$ is a tight $i$-savior, and by definition $y\in Y_i$.
        
        Finally, for every $y\in \mathcal{Y}$, let 
        \[
            \zSCR_y' = 
            \begin{cases}
                N^+_D(y) \cap \zSCR_i &\text{if $D_i$ is loose,}\\
                N^+_D(y) \cap \zSCR_i &\text{if $D_i$ is tight and $y$ is a tight $i^+$-savior,}\\
                N^-_D(y) \cap \zSCR_i &\text{if $D_i$ is tight and $y$ is a tight $i^-$-savior.}
            \end{cases}
        \]
        Further, for every $y\in \yCAL$, let $\zSCR_y$ be an arbitrary subset of $\zSCR_y'$ of size exactly
        \[
            \begin{cases}
                d^+(y)-\Delta+b & \text{if $D_i$ is loose,}\\
                d^+(y) - \Delta+b - \nu_i^\star  &\text{if $D_i$ is tight and $y$ is a tight $i^+$-savior,}\\
                d^-(y)-\Delta+b- \nu_i^\star &\text{if $D_i$ is tight and $y$ is a tight $i^-$-savior.}
            \end{cases}
        \]
        The existence of $\zSCR_y$ is guaranteed by the definition of $y$ being a loose or tight $i$-savior.
        In particular, note that $|\zSCR_y|\leq 2b+1$.
        Finally, for every $y\in \yCAL$, we let $\zSCR^\star_y$ be the set of vertices corresponding to $\zSCR_y$ in $D^\star$, that is
        \begin{align*}
            \zSCR_y^\star =~ &\{z \in \zSCR_y : \text{$z$ is not spanned by $M^\star$}\}\cup \{x_{z,u} : z\in \zSCR_y  \text{ and $z$ is matched to $u$ in $M^\star$}\}.
        \end{align*}

        Recall that, by definition, the vertices in $\zSCR_i$ can have only a few neighbors (namely, less than $2\log^4(\Delta))$) in $X_i$. It follows that we can extract a large set $\yCAL' \subseteq \yCAL$ for which the sets $(\zSCR_{y}^\star\cup \{y\})_{y\in \yCAL'}$ are pairwise disjoint.

    \begin{subclaim}
        There exists $\yCAL' \subseteq \yCAL$ such that $|\yCAL'| \geq \frac{\Delta}{24b \cdot \log^4(\Delta)}$ and the sets $(\zSCR_{y}^\star\cup \{y\})_{y\in \yCAL'}$ are pairwise disjoint.
    \end{subclaim}
    \begin{proofsubclaim}
        Let $\yCAL'\subseteq \yCAL$ be a set for which the sets $(\zSCR_{y}^\star\cup \{y\})_{y\in \yCAL'}$ are pairwise disjoint and, with respect to this property, has maximum cardinality.
        Let $\zSCR$ be the set of vertices $z\in \zSCR_i$ such that either
        \begin{itemize}
            \item $z\in \zSCR_y$ for some $y\in \yCAL'$, or
            \item $z$ is matched to a vertex $z^\star$ in $M^\star$ and $z^\star\in \zSCR_y$ for some $y\in \yCAL'$.
        \end{itemize} 
        By maximality of $\yCAL'$, in $D$, every vertex $y_1\in \yCAL \setminus \yCAL'$ has a neighbor $z_1\in \zSCR$. Moreover, every vertex in $\yCAL'$ has a neighbor in $\zSCR$ by definition. It follows that 
        \[
            \sum_{z\in \zSCR} |N_D(z)\cap X_i| \geq |\yCAL|.
        \]
        Moreover, since $\zSCR\subseteq \zSCR_i$, and by definition the vertices in $\zSCR_i$ have less than $2\log^4(\Delta)$ neighbors inside $X_i$ in $D$. Therefore,
        \[
            |\zSCR| \geq \frac{|\yCAL|}{2\log^4(\Delta)}.
        \]
        By definition, note that at least half of the vertices in $\zSCR$ belong to $\zSCR_y$ for some $y\in \yCAL'$. Therefore, it follows that
        \[
            \sum_{y\in \yCAL'} |\zSCR_y| \geq \frac{1}{2}|\zSCR|\geq \frac{|\yCAL|}{4\log^4(\Delta)}.
        \]
        Finally, by definition, for every $y\in \yCAL$ we have $|\zSCR_y|\leq 2b+1$. It follows that
        \[
            |\yCAL'|\cdot (2b+1)\cdot 4\log^4(\Delta) \geq |\yCAL|.
        \]
        Recall that, by Claims~\ref{claim:nb_loose_saviors} and~\ref{claim:nb_tight_saviors}, there exist at least $\tfrac{1}{2}\Delta$ loose $i$-saviors if $D_i$ is loose, and at least $\tfrac{1}{2}\Delta$ tight $i$-saviors if $D_i$ is tight. Therefore, $|\yCAL| \geq \tfrac{1}{2}\Delta$, and it follows from the inequality above that 
        \[
            |\yCAL'| \cdot 24b \cdot \log^4(\Delta) \geq\Delta.
        \]
        The subclaim follows.
    \end{proofsubclaim}
        
    Henceforth, let us thus fix $\yCAL' \subseteq \yCAL$ a subset of $\yCAL$ of cardinality precisely $\lceil \frac{\Delta}{24b \cdot \log^4(\Delta)}\rceil$, with the property that the sets $(\zSCR_{y}^\star\cup \{y\})_{y\in \yCAL'}$ are pairwise disjoint. Let $\tCAL=\bigcup_{y\in \yCAL'}(\zSCR_{y}^\star\cup \{y\})$ be the set of vertices spanned by these. 
   We finally let $T$ be the random variable counting the number of vertices $y\in\yCAL'$ such that:
    \begin{enumerateNum}
        \item\label{enum:good_triple_1} $y\notin \dom(\phi^\star)$;
        \item\label{enum:good_triple_2} $\zSCR_{y}^\star \subseteq \dom(\phi^\star)$; 
        \item\label{enum:good_triple_3} 
        for every $z\in \zSCR_y^\star$, there exists $x\in K$ such that $x\in \dom(\phi^\star)$ and $\phi'(z) = \phi'(x)$; and
        \item\label{enum:good_triple_4} for every vertex $z\in \zSCR_y^\star$ and every $u\in \tCAL\setminus \{z\}$, $\phi'(z) \neq \phi'(u)$.
    \end{enumerateNum}

    The key point here is that $T$ is a lower bound on the number of rescuers. The reader may note that condition~\ref{enum:good_triple_4} is not necessary for this, but we need it later in order to apply Azuma's inequality.

    \begin{subclaim}
        \label{subclaim:lb_rescuers}
        If $D_i$ is loose, then there exist at least $T$ loose $i$-rescuers. If $D_i$ is tight, then there exist at least $T$ tight $i$-rescuers. 
    \end{subclaim}
    \begin{proofsubclaim}
        Let us fix an arbitrary vertex $y\in \yCAL'$ such that $y$ satisfies conditions~\ref{enum:good_triple_1} to~\ref{enum:good_triple_4}, and let us show that $y$ is a loose (resp. tight) $i$-rescuer if $D_i$ is loose (resp. tight). 
        Observe first that, by~\ref{enum:good_triple_1}, $y$ is uncolored. Moreover, note that $y\in K$, since the vertices in $\yCAL$ are, by definition, loose (resp.\ tight) $i$-saviors, and every such vertex belongs to $K$.
        Then, note that, by conditions~\ref{enum:good_triple_2} and~\ref{enum:good_triple_3}, $\phi(\zSCR_y) \subseteq \phi(K)$.
        If $D_i$ is loose, it then follows that
        \[
            |\dom(\phi) \cap N^+_D(y)| - |\phi(N^+_D(y))| \geq |\zSCR_y| = d^+_D(y)-\Delta+b,
        \]
        hence showing that $y$ is a loose $i$-rescuer.
        Similarly, if $D_i$ is tight and $y$ is a tight $i^+$-savior, then $d^+(y)\leq \Delta$ and, since $K\cup \zSCR_y$ is disjoint from $U_i^\star$,
        \[
            |\dom(\phi) \cap N^+_D(y) \setminus U_i^\star|- |\phi(N^+_D(x) \setminus U_i^\star)| \geq d^+(y)-\Delta+b-\nu_i^\star,
        \]
        hence implying that $y$ is a tight $i$-rescuer. Finally, in the remaining case, that is $D_i$ is tight and $y$ is a tight $i^-$-savior, then $d^-(y)\leq \Delta$ and
        \[
            |\dom(\phi) \cap N^-(y) \setminus U_i^\star| - |\phi(N^-_D(x) \setminus U_i^\star)| \geq d^-(y)-\Delta+b-\nu_i^\star,
        \]
        again implying that $y$ is a tight $i$-rescuer.
    \end{proofsubclaim}

    It remains to show that $T$ is at least $b$ with probability at least $\exp(-\log^2(\Delta))$,
    and the claim then follows from Subclaim~\ref{subclaim:lb_rescuers}. As announced, we first prove that the expectation of $T$ is large, and then that it is concentrated.
    To show that $\EE(T)$ is large, we need a large set of vertices that will be candidates for being the vertices $x\in K$ of condition~\ref{enum:good_triple_3}. For this, we let
    \[
    \kCAL = K  \setminus \bigcup_{z\in \tCAL\setminus  \yCAL'} N_{D^\star}(z).
    \]
    Note that both $\yCAL'$ and $\kCAL$ are included in $K$, so vertices in $\yCAL'$ and $\kCAL$ are linked with digons. Note also that $\yCAL' \cap \kCAL = \emptyset$ by definition. Moreover, since the vertices in $\zSCR_i$ have at most $2\log^4(\Delta)$ neighbors in $X_i$, we have
    \begin{align*}
        |\kCAL| &\geq |K| - \sum_{y\in \yCAL'} \sum_{z\in \zSCR_y} |N_D(z) \cap X_i|\\
        &\geq |K| - |\yCAL'| \cdot (2b+1) \cdot 2\log^4(\Delta) &\text{by definition of $\zSCR_y$,}\\
        &\geq |K| - \tfrac{1}{4}\Delta &\text{by definition of $\yCAL'$,}\\
        &\geq \tfrac{3}{4}\Delta-\tfrac{4}{\epsilonb}d-8b &\text{by Claim~\ref{claim:size_K},}\\
        &\geq \tfrac{1}{2}\Delta &\text{by~\eqref{eq:largeDelta:3}.}
    \end{align*}
    We are now ready to estimate the expectation of $T$.

    \begin{subclaim}
    \label{subclaim:events_B_1}             
    $\EE(T)\geq \Delta^{2/3} + b$.
    \end{subclaim}
    \begin{proofsubclaim}
        For every vertex $y\in \yCAL$, we let $\ell(y) = |\zSCR_y^\star|$ and fix an arbitrary ordering $(z_1^y,\dots,z_{\ell(y)}^y)$ of $\zSCR_y^\star$. Given:
        \begin{itemize}
            \item a vertex $y\in \yCAL'$,
            \item $\ell(y)+1$ distinct vertices $\xCAL =(x_0,x_1,\dots,x_{\ell(y)})$ of $\kCAL$, and
            \item $\ell(y)+1$ distinct colors $\cCAL = (c_0,c_1,\dots,c_{\ell(y)})$ of $[\lceil\Delta/2\rceil]$,
        \end{itemize} 
        we let $A_{y,\xCAL,\cCAL}$ be the event that
        \begin{itemize}
            \item $\phi'(y) = \phi'(x_0) = c_0$ and $\phi'(u) \neq c_0$ for every other vertex $u$ in $\kCAL \cup \tCAL$; and
            \item for every $1\leq j \leq \ell(y)$, $\phi'(z_j^y) = \phi'(x_j) = c_j$, and $\phi'(u)\neq c_j$ for every other vertex $u$ in $\kCAL \cup \tCAL \cup N^+_{D^\star}(z_j^y)\cup N^+_{D^\star}(x_j)$.
        \end{itemize}
        Since $\{yx_0,x_0y\}$ is a digon, observe that $y$ must be counted by $T$ when $A_{y,\xCAL,\cCAL}$ holds. 
        Since, in $D^\star$, every vertex has out-degree at most $\Delta+31b$ by Claim~\ref{claim:degrees_Ds}, we have
        \[
        \PP(A_{y,\xCAL,\cCAL}) \geq\left(\frac{1}{\Delta}\right)^{2\ell(y)+2} \cdot \left(1-(\ell(y)+1)\cdot \frac{2}{\Delta}\right)^{|\kCAL|+|\tCAL|+\ell(y)\cdot (\Delta+31b)}. 
        \]
        Recall that $\ell(y) \leq 2b+1$. Therefore, using that $|\kCAL| \leq \Delta$ (as $\kCAL$ is a biclique), that 
        \[
            |\tCAL| \leq (2b+1) \cdot |\yCAL| \leq \Delta / 8\log^4(\Delta) \leq \Delta,
        \]
        and that $\Delta+31b \leq 2\Delta$, we obtain that 
        \[
        \PP(A_{y,\xCAL,\cCAL}) \geq\left(\frac{1}{\Delta}\right)^{2\ell(y)+2} \cdot \left(1- \frac{6b}{\Delta}\right)^{8b\cdot \Delta}\geq  \left(\frac{1}{\Delta}\right)^{2\ell(y)+2} \cdot \frac{1}{2^{96b^2}}
        \]
        by~\eqref{eq:largeDelta:6}.
        For every $y\in \yCAL'$, since $|\kCAL| \geq \tfrac{1}{2}\Delta$, note that we have at least
        \[
            \tfrac{1}{2}\Delta \cdot (\tfrac{1}{2}\Delta-1) \cdot \ldots \cdot (\tfrac{1}{2}\Delta-\ell(y)) \geq (\tfrac{1}{4}\Delta)^{\ell(y)+1}
        \]
        choices for $\xCAL$, and at least 
        \[
            \tfrac{1}{2}\Delta \cdot (\tfrac{1}{2}\Delta-1) \cdot \ldots \cdot (\tfrac{1}{2}\Delta-\ell(y)) \geq (\tfrac{1}{4}\Delta)^{\ell(y)+1}
        \]
        choices for $\cCAL$, where in both cases we use that $\frac{1}{2}\Delta-\ell(y) \geq \frac{1}{2}\Delta-2b-1 \geq \frac{1}{4}\Delta$ by~\eqref{eq:largeDelta:3}. For every $y\in \yCAL'$, let $A_y$ be the even that $y$ is counted by $T$. For every fixed $y\in\yCAL'$, since the events of the form $A_{y,\xCAL,\cCAL}$ are pairwise disjoint, we have
        \[
            \PP(A_y) \geq \sum_{\mathcal{X},\mathcal{C}} \PP(A_{y,\xCAL,\cCAL}) \geq (\tfrac{1}{4}\Delta)^{2\ell(y)+2} \cdot (\tfrac{1}{\Delta})^{2\ell(y)+2} \cdot \frac{1}{2^{96b^2}} \leq 2^{-96b^2-8b-8},
        \]
        where in the equality we used that $\ell(y)\leq 2b+1$.
        Hence, by linearity of the expectation, we have 
        \begin{align*}
        \EE(T) = \sum_{y\in \yCAL'} \EE(Z_y)
            \geq |\yCAL'| \cdot 2^{-96b^2 - 8b-8}&\geq 2^{-96b^2 - 8b-12}\cdot \frac{\Delta}{b\log^4 \Delta} &\text{by definition of $\yCAL'$,}\\
            &\geq \Delta^{2/3}  + b&\text{by~\eqref{eq:largeDelta:9}.}
        \end{align*}
    \end{proofsubclaim}

    We now show that $T$ is concentrated.
    
    \begin{subclaim}
        \label{subclaim:events_B_2} 
        For any $\lambda\geq 0$, $\PP(|T-\EE(T)|>\lambda)\leq 2\exp\left(\frac{-\lambda^2}{144\Delta}\right)$.
    \end{subclaim}
    \begin{proofsubclaim}
        We aim to apply Azuma's inequality. Let us fix a labeling $w_1,\ldots,w_n$ of the vertices of $D$ such that, for some $q$, $w_1,\ldots,w_q\in V(D)\setminus(\kCAL \cup \tCAL)$, and $w_{q+1},\ldots w_n\in \kCAL \cup \tCAL$. 
        Note that $T$ is determined by the $n=|V(D)|$ assignments $\phi'(w_1),\dots,\phi'(w_n)$. 
        
        Let $(c_1,\dots,c_n)$ and $(c_1',\dots,c_n')$ be two arbitrary sequences of $n$ colors of $[\lceil \Delta/2\rceil ]$.
        In order to apply Azuma's inequality, let us bound, for every $j\in [n]$, the value of
        \begin{align*}
            \delta_j = ~&\Big| \EE\big(T \mid \phi'(w_1) = c_1 \cap \dots\cap  \phi'(w_{j-1}) = c_{j-1} \cap \phi'(w_{j}) = c_{j}\big)\\ 
            &- \EE\big(T \mid \phi'(w_1) = c_1\cap \dots\cap  \phi'(w_{j-1}) = c_{j-1} \cap \phi'(w_{j}) = c_{j}'\big) \Big|.
        \end{align*}

        For better readability, for every $j\in [n]$, let $W_j$ denote the event $\phi'(w_1) = c_1 \cap \ldots \cap \phi'(w_j) = c_j$, and let $W_j'$ denote the event $\phi'(w_1) = c_1 \cap \ldots \cap \phi'(w_{j-1}) = c_{j-1} \cap \phi'(w_j) = c_j'$, so
        \[
            \delta_j = \Big|\EE\big(T\mid W_{j}\big) - \EE\big(T\mid W_j'\big)\Big|.
        \]

        First, observe that changing the assigned color $\phi'(w_j)$ of a single vertex $w_j$ from $c_j$ to $c_j'$ can affect the value of $T$ by at most $2$, as by Condition~\ref{enum:good_triple_4} at most one vertex of $\tCAL$ is colored with $c_j$, and at most one is colored with $c_j'$. Therefore, for every $j\in [n]$, we have $\delta_j \leq 2$. In particular, it follows that
        \begin{equation}
            \label{eq:delta_j}
            \sum_{j=q+1}^n \delta_j^2 \leq 4\cdot |\kCAL \cup \tCAL| \leq 8\Delta.
        \end{equation}

        Assume now that $j\leq q$ and let us show a more precise bound in this case (that is, when $w_1,\dots,w_j \notin \kCAL\cup \tCAL$). Let $N_j = N_{D^\star}(w_j) \cap \tCAL$, and let $F_j$ be the event that some vertex in $N_j$ receives color $c_j$ or $c_j'$ through $\phi'$. When $F_j$ does not hold, observe that changing the color of $w_j$ from $c_j$ to $c_{j}'$ does not affect the value of $T$, so in particular
        \begin{equation}
            \label{eq:equality_EE}
            \EE(T\mid W_j \cap \overline{F_j}) = \EE(T\mid W_j' \cap \overline{F_j}).
        \end{equation}
        Moreover, since $j\leq q$, we have that $\{w_1,\dots,w_j\}$ is disjoint from $N_j$. This implies 
        \begin{equation}
        	\label{eq:prob_FJ_bar}
        	\PP(\overline{F_j}) = \PP(\overline{F_j} \mid W_j)= \PP(\overline{F_j} \mid W_j'),
        \end{equation}
        and
        \begin{equation}
            \label{eq:prob_FJ}
           \PP(F_j) =  \PP(F_j \mid W_j)= \PP(F_j \mid W_j')   \leq |N_j| \cdot \frac{4}{\Delta},
        \end{equation}
        where the last inequality follows from the union bound and the fact that some vertex receives $c_j$ or $c_j'$ with probability $\frac{2}{\lceil \Delta/2\rceil} \leq 4/\Delta$.
        Combining the inequalities above, we obtain that
        \begin{align*}
            \delta_j &= \Big|\EE\big(T\mid W_{j}\big) - \EE\big(T\mid W_j'\big)\Big|\\
            &= \Big| \EE(T\mid W_j \cap F_j) \cdot \PP(F_j\mid W_j) + \EE(T\mid W_j \cap \overline{F_j}) \cdot \PP(\overline{F_j}\mid W_j) \\
            & \hspace*{0.5cm}-\EE(T\mid W_j' \cap F_j) \cdot \PP(F_j\mid W_j') - \EE(T\mid W_j' \cap \overline{F_j}) \cdot \PP(\overline{F_j} \mid W_j') \Big| \\
            &= \Big| \EE(T\mid W_j \cap F_j) \cdot \PP(F_j\mid W_j) -\EE(T\mid W_j' \cap F_j) \cdot \PP(F_j\mid W_j')\Big| &\text{by~\eqref{eq:equality_EE} and~\eqref{eq:prob_FJ_bar},}\\
            &\leq \Big| \EE(T\mid W_j \cap F_j) -\EE(T\mid W_j' \cap F_j) \Big|\cdot \frac{4\cdot |N_j|}{\Delta} &\text{by~\eqref{eq:prob_FJ}.}
        \end{align*}
        Since changing the color of a single vertex affects $T$ by at most $2$, it follows that, for every $j\leq q$,
        \begin{equation}
            \label{eq:delta_j_2}
            \delta_j \leq \frac{8|N_j|}{\Delta}.
        \end{equation}
        We thus have 
        \begin{align*}
            \sum_{j=1}^n \delta_j^2 &= \sum_{j=1}^q \delta_j^2 + \sum_{j=q+1}^n \delta_j^2\\
            &\leq 2\sum_{j=1}^q \delta_j + 8\Delta &\text{by~\eqref{eq:delta_j} and the fact that $\delta_j\leq 2$,}\\
            &\leq 8\Delta + \frac{16}{\Delta}\sum_{j=1}^q |N_j| &\text{by~\eqref{eq:delta_j_2}.}
        \end{align*}
        Recall that, by definition of $N_j$, $\sum_{j=1}^q |N_j|$ is in particular bounded by the number of arcs of $D^\star$ having an extremity in $\tCAL$, which is at most $2\cdot \delmax(D^\star) \cdot |\tCAL|$. By Claim~\ref{claim:degrees_Ds} and the facts that $\Delta+31b\leq 2\Delta$ and that $|\tCAL|\leq \Delta$, it then follows that
        \[
            \sum_{j=1}^n \delta_j^2 \leq 8\Delta + \frac{16}{\Delta} \cdot 2 \cdot  \delmax(D^\star) \cdot |\tCAL| \leq 72\Delta.
        \]
        By Lemma~\ref{lemma:azuma}, it follows that, for every $\lambda \geq 0$,
        \[
            \PP(|T-\EE(T)| > \lambda) \leq 2\exp( - \lambda^2 / 144\Delta),
        \]
        as desired. The subclaim follows.
    \end{proofsubclaim}

    We are now ready to combine the different subclaims and to conclude the proof of Claim~\ref{claim:prob_i_extendable}.
    We have
    \begin{align*}
        \PP(\text{$\phi$ is not $i$-extendable}) &\leq \PP(T < b) &\text{by Subclaim~\ref{subclaim:lb_rescuers},}\\
        &\leq \PP\big(|T-\EE(T)| > \Delta^{2/3}\big) &\text{by Subclaim~\ref{subclaim:events_B_1},}\\
        &\leq 2\exp\left(-\Delta^{1/3}/144\right) &\text{by Subclaim~\ref{subclaim:events_B_2},}\\
        &\leq \exp(-\log^{2}(\Delta)) &\text{by~\eqref{eq:largeDelta:8},}
    \end{align*}
    as desired. The claim follows.
    \end{proofclaim}

    \subsubsection{Probabilistic analysis: conluding with Lov\'asz Local Lemma}

    For every $s\in S$, let $A_s$ be the event that $\phi$ is not $s$-extendable, and for every $i\in [t]$ let $A_i$ be the event that $\phi$ is not $i$-extendable. By Claims~\ref{claim:prob_s_extendable} and~\ref{claim:prob_i_extendable}, each of these bad events holds with probability at most 
    \[
        p = \exp(-\log^2(\Delta)).
    \]

    Note that, for every vertex $u\in V(D)$, whether $u\in \dom(\phi)$ depends only on the colors assigned by $\phi'$ to the vertex corresponding to $u$ in and its neighbors in $D^\star$. 
    
    Therefore, for any vertex $s\in S$, whether $\phi$ is $s$-extendable may depend only on the colors $\phi'(u)$ given to each vertex $u$ at distance at most $2$ from $s$ in the underlying graph of $D^\star$.
    Recall that, by Claim~\ref{claim:adjacency_Uis_bis}, whenever two vertices are identified, they have at least one neighbor in common. Therefore, whether $\phi$ is $s$-extendable may depend only on the colors given to vertices in $D^\star$ corresponding to vertices in $D$ which are at distance at most $6$ from $s$.

    Similarly, for every $i\in [t]$, whether $\phi$ is $i$-extendable may depend only on the colors given to vertices in $D^\star$ corresponding to vertices in $D$ which are at distance at most $6$ from some vertex in $X_i$.

    It follows that, for every vertex $s\in S$, $A_s$ is mutually independent from all events $A_{s'}$ with $s$ and $s'$ being at distance at least $13$ from each other, and from all events $A_{i}$ with $s$ being at distance at least $13$ from all vertices in $X_i$.
    Hence, since the underlying graph of $D$ has maximum degree at most $2(\Delta+b+1)$ by Claim~\ref{claim:degrees}, $A_s$ is mutually independent from all other events except at most $12\cdot (2\Delta+2b+2)^{12}$ of them.
    
    Similarly, for every $i\in [t]$, $A_i$ is mutually independent from all events $A_{s}$ with $s$ being at distance at least $13$ from $X_i$, and from all events $A_j$ with $X_i$ being at distance at least $13$ from $X_j$. Since $|X_i| \leq \Delta+5d \leq 2\Delta$ by Claim~\ref{claim:dense_decomposition}\ref{claim:dense_decomposition:i} and by~\eqref{eq:largeDelta:3},
    $A_i$ is mutually independent from all events but at most $(2\Delta)\cdot 12\cdot (2\Delta+2b+2)^{12}$ of them.

    Since
    \[
        4 \cdot \exp(-\log^2(\Delta)) \cdot (2\Delta+2b+2)^{13} \leq 1
    \]
    by~\eqref{eq:largeDelta:10}, it follows from Lov\'asz Local Lemma (Lemma~\ref{lemma:lll}) that $\phi$ is extendable with positive probability. By Claim~\ref{claim:no_extendable_dicoloring}, we get that $\dic(D) \leq \Delta-b+1$, a contradiction. Theorem~\ref{thm:main_reform} follows.
\end{proof}

\bibliographystyle{abbrv}
\bibliography{refs}

\end{document}